\documentclass[11pt]{amsart}
\usepackage{epstopdf}
\usepackage[numbers,sort&compress]{natbib}
\bibpunct[, ]{[}{]}{,}{n}{,}{,}
\makeatletter
\def\NAT@def@citea{\def\@citea{\NAT@separator}}
\makeatother
\usepackage{xcolor}

\usepackage{amsmath}
\usepackage{amssymb}
\usepackage{amsthm}
\usepackage{amsfonts}
\usepackage{arydshln}
\usepackage{enumerate}
\usepackage[thinlines]{easybmat}
\numberwithin{equation}{section}
\newcommand{\hsp}{\mathcal{H}}
\newcommand{\oL }{\mathcal{L}(\hsp) }

\newcommand{\dfc}[1]{D_{\Theta_#1}}

\newcommand{\dcf}[1]{\widetilde{D}_{\Theta_#1}}
\newcommand{\dfct}[1]{{D}_{\widetilde{\Theta}_#1}}

\newcommand{\B}{{\mathbf{B}}}
\newcommand{\h}{\mathcal{H}}

\newcommand{\hh}{{\mathbf{H}}}
\newcommand{\ke}{{\mathbf{k}}}
\newcommand{\C}{{\mathbf{C}}}
\newcommand{\F}{{\mathbf{F}}}
\newcommand{\J}{{\mathbf{J}}}
\newcommand{\T}{{\mathbf{T}}}

\newcommand{\z}{{z}}
\newtheorem{theorem}{Theorem}[section]
\newtheorem{proposition}[theorem]{Proposition}
\newtheorem{corollary}[theorem]{Corollary}
\newtheorem{lemma}[theorem]{Lemma}
\theoremstyle{definition}

\newtheorem{remark}[theorem]{Remark}
\numberwithin{equation}{section}

\title[Characterizations of MATTO's]{Characterizations of Matrix valued asymmetric truncated Toeplitz operators}

\author[R. Khan]{Rewayat Khan}
\address{R. Khan, Abbottabad University of Science and Technology, Pakistan}
\email{rewayat.khan@sabanciuniv.edu}

\author[Y. Ameur]{Yagoub Ameur}
\address{Y. Ameur, Laboratoire de mathematiques pures et appliques Universite de Amar telidji Laghouat
Algerie 03000}
\email{a.yagoub@lagh-univ.dz}

\author[J. Khan]{Jamroz Khan}
\address{J. Khan, Government College of Management Science II Peshawar, Pakistan}
\email{jamroz.khan73@gmail.com}

\keywords{model spaces, model for a contraction, truncated Toeplitz operator, asymmetric truncated Toeplitz operator, matrix valued truncated Toeplitz operator, matrix valued asymmetric truncated Toeplitz operator.}
\subjclass[2010]{Primary 47B35, Secondary 47B32, 30D20}
\begin{document}
\begin{abstract}{We characterize matrix-valued asymmetric truncated Toeplitz operators (which are compressions of
multiplication operators acting between two possibly different model spaces) by using compressed shifts, modified compressed shifts and shift invariance.
	}
\end{abstract}
\maketitle

\section{Introduction}

Let $H^2$ be the classical Hardy space in the unit disk $\mathbb{D}=\{\lambda\in\mathbb{C}:|\lambda|<1\}$. Truncated Toeplitz operators (TTO's) and asymmetric truncated Toeplitz operators (ATTO's) are compressions of multiplication operator to the backward shift invariant subspaces of $H^2$ (with two possibly different underlying subspaces in the asymmetric case). Each of these subspaces is of the form $K_{\theta}=(\theta H^{2})^{\perp}=H^2\ominus \theta H^2$, where $\theta$ a complex-valued inner function: $\theta\in H^\infty$ and $|\theta(\z)|=1$ a.e. on the unit circle $\mathbb{T}=\partial\mathbb{D}=\{\z\in\mathbb{C}: |\z|=1\}$.

It is natural to consider TTO's and ATTO's defined on subspaces of vector valued Hardy space $H^2(\hsp)$ with $\hsp$ a separable complex Hilbert space. 
 A vector valued model space $K_{\Theta}\subset H^{2}(\hsp)$ is the orthogonal complement of $\Theta H^{2}(\hsp)$, that is, $K_{\Theta}= H^{2}(\hsp)\ominus  \Theta H^{2}(\hsp)$. Here $\Theta$ is an operator valued inner function: a function with values in $\oL $ (the algebra of all bounded linear operators on $\hsp$), analytic in $\mathbb{D}$, bounded and such that the boundary values $\Theta(\z)$ are unitary operators a.e. on $\mathbb{T}$. These spaces appear in connection with model theory of Hilbert space contractions (see \cite{NF}). Let $P_{\Theta}$ be the orthogonal projection from $L^2(\hsp)$ onto $K_{\Theta}$.

For two inner functions $\Theta_1,\Theta_2\in H^{\infty}(\oL)$ and $\Phi\in L^2(\oL )$ let
\begin{equation}\label{11}
A_{\Phi}^{\Theta_{1}, \Theta_{2}}f=P_{\Theta_{2}}(\Phi f),\quad f\in K_{\Theta_{1}}\cap H^{\infty}(\hsp ).
\end{equation}

 The operator $A_{\Phi}^{\Theta_{1}, \Theta_{2}}$ is called a matrix valued asymmetric truncated Toeplitz operator (MATTO), while $A_{\Phi}^{\Theta_{1}}=A_{\Phi}^{\Theta_{1}, \Theta_{1}}$ is called a matrix valued truncated Toeplitz operator (see \cite{KT}). Both are densely defined. Let $\mathcal{MT}(\Theta_{1},\Theta_{2})$ be the set of all MATTO's of the form \eqref{11} which can be extended boundedly to the whole space $K_{\Theta_{1}}$ and for $\Theta_{1}=\Theta_{2}=\Theta$ let $\mathcal{MT}(\Theta )=\mathcal{MT}(\Theta,\Theta)$.

 Two important examples of operators from $\mathcal{MT}(\Theta )$ are the model operators
\begin{equation}\label{modl}
S_{\Theta}=A_{z }^{\Theta }=A_{zI_{\hsp}}^{\Theta }\quad\text{and}\quad S_{\Theta }^*=A_{\bar z }^{\Theta }=A_{\bar zI_{\hsp}}^{\Theta  }.
\end{equation} It is known that each $C_0$ contraction with finite defect indices is unitarily equivalent to $S_{\Theta}$ for some operator valued inner function $\Theta$ (see \cite[Chapter IV]{NF}).

 Sections 2 and 3 contain preliminary material on spaces of vector valued functions (Section 2), model spaces and MATTO's (Section 3). In Section 4 we consider some model space operators and their action on $\mathcal{MT}(\Theta_{1},\Theta_{2})$. Section 5 is devoted to characterizations of MATTO's in terms of $S_{\Theta_{1}}$, $S_{\Theta_{2}}$ and their adjoints. In Section 6 we consider the notion of shift invariance of operators from $\mathcal{MT}(\Theta_{1},\Theta_{2})$. In section 7 we use modified compressed shift to characterize MATTO's.

\section{Spaces of vector valued functions and their operators}\label{s2}

Let $\hsp$ be a complex separable Hilbert space. The space $L^2(\hsp)$ consists of elements $f:\mathbb{T}\to \hsp$ of the form
\begin{equation}\label{1}
 \begin{array}{rl}
 f(\z)=\sum\limits_{n=-\infty}^{\infty}a_{n}\z^n&  (\text{a.e. on }\mathbb{T})\\
  \text{with}&\{a_{n}\}\subset\hsp\text{ such that}  \sum\limits_{n=-\infty}^{\infty}\|a_{n}\|_\hsp^{2}<\infty.\end{array}\end{equation}
It is a (separable) Hilbert space with the inner product given by
\begin{equation*}
\langle f,g\rangle_{L^{2}(\hsp)}=\int_{\mathbb{T}}\langle f(\z),g(\z)\rangle_{\hsp}\,dm(\z),\quad f,g\in L^2(\hsp).
\end{equation*}
If $f\in L^2(\hsp)$ is given by \eqref{1}, then its Fourier series converges in the  $L^2(\h)$ norm and
$$\|f\|_{L^2(\hsp)}^2=\int_{\mathbb{T}} \| f(\z)\|_\hsp^2\,dm(\z)=\sum\limits_{n=-\infty}^{\infty}\|a_{n}\|_\hsp^{2}.$$

The vector valued Hardy space $H^2(\hsp)$ is defined as the set of all the elements of $L^2(\hsp)$ whose Fourier coefficients with negative indices vanish.
Each $f\in H^2(\hsp)$, $\displaystyle f(\z)=\sum_{n=0}^{\infty} a_n \z^n$, can also be identified with a function
$$f(\lambda)=\sum\limits_{n=0}^\infty a_n\lambda^n,\quad \lambda\in\mathbb{D},$$
analytic in the unit disk $\mathbb{D}$. Denote by $P_+$ the orthogonal projection $P_+:L^2(\hsp)\to H^2(\hsp)$,
 $$P_+\left(\sum_{n=-\infty}^{\infty} a_n \z^n\right)=\sum_{n=0}^{\infty} a_n \z^n.$$

 The space of essentially bounded functions in $L^{2}(\hsp)$ is denoted by $L^{\infty}(\hsp)$ and
 $H^{\infty}(\hsp)=L^{\infty}(\hsp)\cap H^{2}(\hsp)$.

Now let $\oL $ be the algebra of all bounded linear operators on $\hsp$ equipped with the operator norm $\|\cdot\|_{\oL}$. We can define $\oL$-valued, i.e., operator valued functions. We denote these spaces by $L^{2}(\oL)$ and $H^{2}(\oL)$, respectively. The space of operator valued, essentially bounded functions on $\mathbb{T}$ is denoted by $L^{\infty}(\oL)$, and the space of bounded analytic functions in $H^{2}(\oL)$ is denoted by $H^{\infty}(\oL)$.

Note that for each $\lambda\in \mathbb{D}$ the function $\ke_\lambda(z)= (1-\bar{\lambda}z)^{-1}I_{\hsp}$ belongs to $H^{\infty}(\oL )$ and has the following reproducing property
$$\langle f, \ke_\lambda x\rangle_{L^2(\hsp)}=\langle f(\lambda),x\rangle_{\hsp}, \quad f\in H^2(\hsp).$$

To each $\F\in L^\infty(\oL)$ there corresponds a multiplication operator $M_\F:L^2(\hsp)\to L^2(\hsp)$: for $f\in L^2(\hsp)$,
$$(M_\F f)(\z)=\F(\z)f(\z)\quad \text{a.e. on }\mathbb{T}.$$
By $T_{\F}$ we will denote the compression of $M_{\F}$ to the Hardy space: $T_{\F}:H^2(\hsp)\to H^2(\hsp)$, $$T_{\F}f=P_+M_{\F}f\quad\text{for }f\in H^2(\hsp).$$

In particular, for $M_z=M_{zI_\hsp}$ we have $M_z^*=M_{\bar{z}}=M_{\bar{z}I_\hsp}$. The operator $S=T_z=M_{z|H^2(\hsp)}$ is called the (forward) shift operator. Its adjoint, the backward shift operator $S^*=T_{\bar z}$, is given by the formula
$$S^*f(z)=\bar z\big(f(z)-f(0)\big).$$

Here we assume that $\dim\hsp<\infty$ so we can consider $\oL $ as a Hilbert space with the Hilbert--Schmidt norm and we may also define as above the spaces $L^2(\oL)$ and $H^2(\oL)$.
We can decompose $L^{2}(\hsp)$ as
$L^2(\oL)=\left[zH^2(\oL)\right]^*\oplus H^2(\oL).$

For $\F\in L^2(\oL )$ the operators $M_{\F}$ and $T_{\F}$ can be  densely defined, on $L^{2}(\hsp )$ and $H^{2}(\hsp )$, respectively.
 For more details on spaces of vector valued functions we refer the reader to \cite{berc,NF}.

\section{Model spaces and MATTO's}\label{s3}

An inner function is called pure if $\|\Theta(0)\|_{\oL}<1$. Throughout this paper we consider only pure inner functions.
The model space
$$K_{\Theta}=H^{2}(\hsp)\ominus \Theta H^{2}(\hsp)$$
corresponding to an inner function $\Theta$ is invariant under the backward shift $S^*$. Moreover, by the vector valued version of Beurling's invariant subspace theorem, each closed (nontrivial) $S^*$--invariant subspace of $H^2(\hsp)$ is a model space (\cite[Chapter 5, Theorem 1.10]{berc}). Let $P_{\Theta}$ be the orthogonal projection from $L^2(\hsp)$ onto $K_{\Theta}$. Then
$$P_{\Theta}=P_+-M_{\Theta}P_+M_{\Theta^*}.$$
Note that $M_{\Theta}$ is the multiplication operator on $L^{2}(\hsp)$.

For each $\lambda\in\mathbb{D}$ we can consider
$$\ke_{\lambda}^{\Theta}(z)=\tfrac1{1-\bar\lambda z}(I_\hsp-\Theta(z)\Theta(\lambda)^*)\in H^{\infty}(\oL).$$
For each $x\in\hsp$ and $\lambda\in\mathbb{D}$, the function $\ke_{\lambda}^{\Theta}x=P_{\Theta}(\ke_{\lambda}x)$  belongs to $K_{\Theta}^{\infty}=K_{\Theta}\cap H^{\infty}(\hsp )$ and has the following reproducing property
$$\langle f, \ke_{\lambda}^{\Theta}x\rangle_{L^2(\hsp)}=\langle f(\lambda), x\rangle_\hsp\quad\text{for every }f\in K_{\Theta}.$$
It follows in particular that $K_{\Theta}^{\infty}=K_{\Theta}\cap H^{\infty}(\hsp)$ is a dense subset of $K_{\Theta}$.

Now let $\Theta_{1},\Theta_{2}\in H^{\infty}(\oL)$ be two inner functions. For any $\Phi\in L^{2}(\oL)$ define
$$A_{\Phi}^{\Theta_{1}, \Theta_{2}}f=P_{\Theta_{2}}M_\Phi f=P_{\Theta_{2}}(\Phi f),\quad f\in K_{\Theta_{1}}^{\infty}.$$

The operator $A_{\Phi}^{\Theta_{1}, \Theta_{2}}$ is called a matrix valued asymmetric truncated Toeplitz operator (MATTO) with symbol $\Phi\in L^{2}(\oL)$. It is densely defined and if bounded, it can be extended to a bounded linear operator $A_{\Phi}^{\Theta_{1}, \Theta_{2}}:K_{\Theta_{1}}\to K_{\Theta_{2}}$ (in which case we simply say that $A_{\Phi}^{\Theta_{1}, \Theta_{2}}$ is bounded). Let us denote
$$\mathcal{MT}(\Theta_{1}, \Theta_{2})=\{A_{\Phi}^{\Theta_{1}, \Theta_{2}}:\, \Phi\in L^2(\oL)\text{ and }A_{\Phi}^{\Theta_{1}, \Theta_{2}} \text{ is bounded}\}.$$
For $\Theta_{1}=\Theta_{2}=\Theta$ we put $A_{\Phi}^{\Theta}=A_{\Phi}^{\Theta, \Theta}$ (a matrix valued truncated Toeplitz operator, MTTO) and $\mathcal{MT}(\Theta)=\mathcal{MT}(\Theta, \Theta)$.

Let $$\mathcal{D}_{\Theta}=\{(I_{\hsp}-\Theta\Theta(0)^*)x:\ x\in\hsp\}=\{\ke_{0}^{\Theta}x:\ x\in\hsp\}\subset K_{\Theta}.$$
Then for $f\in K_{\Theta}$ we have $f\perp \mathcal{D}_{\Theta}$ if and only if $f(0)=0$.
It follows that
$$(S_{\Theta}^*f)(z)=\left\{\begin{array}{cl}
\bar z f(z)&\text{for }f\perp\mathcal{D}_{\Theta},\\
-\bar z\big(\Theta(z)-\Theta(0)\big)\Theta(0)^*x&\text{for }f=\ke_{0}^{\Theta}x\in\mathcal{D}_{\Theta}.
\end{array}\right.$$
Denote (the defect operator) by $D_{\Theta}=I_{K_{\Theta}}-S_{\Theta}S_{\Theta}^*.$
Since for each $f\in H^2(\hsp)$ we have $(I_{H^2(\hsp)}-SS^*)f=f(0)$ (a constant function in $H^2(\hsp)$), it follows that for $f\in K_{\Theta}$,
\begin{equation}\label{ect}
\begin{split}
D_{\Theta}f&=(I_{K_{\Theta}}-S_{\Theta}S_{\Theta}^*)f=P_{\Theta}(I_{H^2(\hsp)}-SS^*)f\\
&=(I_{\hsp}-\Theta\Theta(0)^*)f(0)=\ke_{0}^{\Theta}f(0)\in \mathcal{D}_{\Theta}.
\end{split}
\end{equation}
More precisely,
$$D_{\Theta}f=\left\{\begin{array}{cl}
0&\text{for }f\perp\mathcal{D}_{\Theta},\\
\ke_{0}^{\Theta}(I_{\hsp}-\Theta(0)\Theta(0)^*)x&\text{for }f=\ke_{0}^{\Theta}x\in\mathcal{D}_{\Theta}.
\end{array}\right.$$

Since $\ke_{0}^{\Theta}$ is invertible in $H^{\infty}(\oL)$, the formula
\begin{equation*}
\Omega_{\Theta}(\ke_{0}^{\Theta}x)=x,\quad x\in\hsp,
\end{equation*}
gives a well defined operator $\Omega_{\Theta}:\mathcal{D}_{\Theta}\to\hsp$. Clearly, $\Omega_{\Theta}$ is bounded (here for example as an operator acting between two finite dimensional Hilbert spaces). Since $\hsp$ can be identified with a subspace of $H^2(\hsp)$ (the space of all constant $\hsp$--valued functions), $\Omega_{\Theta}$ can be seen as an operator from $\mathcal{D}_{\Theta}$ into $H^2(\hsp)$. For each $f\in K_{\Theta}$ we then have
\begin{equation}
\label{wyk1}
\Omega_{\Theta}D_{\Theta}f=\Omega_{\Theta}(\ke_{0}^{\Theta}f(0))=f(0)=(I_{H^2(\hsp)}-SS^*)f.
\end{equation}

\section{MATTO's and some model space operators}\label{s4}
In \cite{RK} the author considers the generalized Crofoot transform. A bounded linear operator $W\in \oL$ is called $\text{a}$ contraction if $\|W\|_{\oL}\leq 1$ and $\text{a}$ strict contraction if $\|W\|_{\oL} < 1$. The operators $D_{W}=(I-W^{*}W)^{\frac{1}{2}}$ and $D_{W^{*}}=(I-WW^{*})^{\frac{1}{2}}$ are called the defect operators of $W$. For a pure inner function $\Theta\in H^{\infty}(\oL )$ and $W\in \oL $ such that $\|W\|_{\oL}<1$ define the generalized Crofoot transform $J_{W}^{\Theta}: L^2(\hsp)\to L^2(\hsp )$ by
$$J_{W}^{\Theta}f=D_{W^{*}}(I_{L^2(\hsp)}-\Theta W^{*})^{-1}f,\quad f\in L^2(\hsp). $$
Then $J_{W}^{\Theta}$ is unitary and maps $K_{\Theta}$ onto $K_{\Theta^{W}}$, where
$$\Theta^{W}(z)=-W+D_{W^{*}}(I_{L^2(\hsp)}-\Theta(z) W^{*})^{-1}\Theta(z) D_{W}.$$
The following theorem describes the action of the Crofoot transform on $\mathcal{MT}(\Theta_{1}, \Theta_{2})$.
\begin{theorem}{\em\cite{RK2}}
	Let $\Theta_{1},\Theta_{2}\in H^{\infty}(\oL)$ be two pure inner functions and let $W_1,W_2\in \oL$ be such that $\|W_1\|_{\oL}<1$ and $\|W_2\|_{\oL}<1$. A bounded linear operator $A:K_{\Theta_1}\to K_{\Theta_2}$ belongs to $\mathcal{MT}(\Theta_{1}, \Theta_{2})$ if and only if $J_{W_{2}}^{\Theta_2}A(J_{W_{1}}^{\Theta_1})^{*}$ belongs to $\mathcal{MT}(\Theta_{1}^{W_1}, \Theta_{2}^{W_2})$. More precisely, $A=A_{\Phi}^{\Theta_{1}, \Theta_{2}}\in \mathcal{MT}(\Theta_{1}, \Theta_{2})$ if and only if $J_{W_{2}}^{\Theta_2}A(J_{W_{1}}^{\Theta_1})^{*}=A_{\Psi}^{\Theta_{1}^{W_1}, \Theta_{2}^{W_2}}\in\mathcal{MT}(\Theta_{1}^{W_1}, \Theta_{2}^{W_2})$ with
	\begin{equation*}
	\Psi = D_{W_{2}^{*}}(I_{\oL}-\Theta_{2}W_{2}^{*})^{-1}\Phi D_{W_{1}^{*}}(I_{\oL}+\Theta^{W_1}_{1} W_{1}^{*})^{-1}.
	\end{equation*}
\end{theorem}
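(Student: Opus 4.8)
The plan is to reduce the asymmetric, operator-valued statement to a computation with multiplication operators, exploiting the intertwining property of the Crofoot transform. First I would record the two basic facts about the generalized Crofoot transform stated just above: $J_{W}^{\Theta}$ is unitary from $K_{\Theta}$ onto $K_{\Theta^{W}}$, and it is the restriction of the multiplication-type operator $f\mapsto D_{W^{*}}(I-\Theta W^{*})^{-1}f$ on $L^{2}(\hsp)$. The key algebraic identity I would aim to establish is that on all of $L^{2}(\hsp)$,
\begin{equation*}
J_{W_{2}}^{\Theta_{2}}\,M_{\Phi}\,\bigl(J_{W_{1}}^{\Theta_{1}}\bigr)^{*}=M_{\Psi}
\end{equation*}
for the stated $\Psi$, or at least that the two sides agree after compressing by $P_{\Theta_{2}^{W_{2}}}$ on the left and restricting to $K_{\Theta_{1}^{W_{1}}}$ on the right. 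Here $\bigl(J_{W_{1}}^{\Theta_{1}}\bigr)^{*}$ is again multiplication by an $H^{\infty}(\oL)$-type factor times a resolvent; multiplying three such multiplication operators together is just pointwise operator multiplication on $\mathbb{T}$, so $\Psi(\z)$ is a pointwise product of the corresponding matrix functions. The only subtlety is identifying $\bigl(J_{W_{1}}^{\Theta_{1}}\bigr)^{*}$ correctly: since $J_{W_{1}}^{\Theta_{1}}$ maps $K_{\Theta_{1}}$ to $K_{\Theta_{1}^{W_{1}}}$, its adjoint as an operator $L^{2}(\hsp)\to L^{2}(\hsp)$ is multiplication by the conjugate-transpose symbol, and one must check that the factor $(I+\Theta_{1}^{W_{1}}W_{1}^{*})^{-1}$ appearing in $\Psi$ is exactly what comes out after using the formula for $\Theta_{1}^{W_{1}}$ to simplify $(I-\Theta_{1}W_{1}^{*})^{-*}D_{W_{1}^{*}}$. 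I expect this bookkeeping with the Crofoot symbols to be the main obstacle — it is the one genuinely nonformal step.

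With that identity in hand the proof is a diagram chase. Suppose $A=A_{\Phi}^{\Theta_{1},\Theta_{2}}\in\mathcal{MT}(\Theta_{1},\Theta_{2})$. For $f\in K_{\Theta_{1}^{W_{1}}}^{\infty}$ we have $\bigl(J_{W_{1}}^{\Theta_{1}}\bigr)^{*}f\in K_{\Theta_{1}}$ (and one checks it lands in the dense domain $K_{\Theta_{1}}^{\infty}$, using that the Crofoot factors are bounded analytic and boundedly invertible), so
\begin{equation*}
J_{W_{2}}^{\Theta_{2}}A\bigl(J_{W_{1}}^{\Theta_{1}}\bigr)^{*}f
=J_{W_{2}}^{\Theta_{2}}P_{\Theta_{2}}M_{\Phi}\bigl(J_{W_{1}}^{\Theta_{1}}\bigr)^{*}f
=P_{\Theta_{2}^{W_{2}}}J_{W_{2}}^{\Theta_{2}}M_{\Phi}\bigl(J_{W_{1}}^{\Theta_{1}}\bigr)^{*}f
=P_{\Theta_{2}^{W_{2}}}M_{\Psi}f,
\end{equation*}
where the middle equality uses that $J_{W_{2}}^{\Theta_{2}}$ intertwines the projections $P_{\Theta_{2}}$ and $P_{\Theta_{2}^{W_{2}}}$ (because it is unitary and carries $K_{\Theta_{2}}$ onto $K_{\Theta_{2}^{W_{2}}}$, hence carries $\Theta_{2}H^{2}(\hsp)$ onto $\Theta_{2}^{W_{2}}H^{2}(\hsp)$), and the last equality is the algebraic identity restricted to $K_{\Theta_{1}^{W_{1}}}^{\infty}$. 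Thus $J_{W_{2}}^{\Theta_{2}}A\bigl(J_{W_{1}}^{\Theta_{1}}\bigr)^{*}=A_{\Psi}^{\Theta_{1}^{W_{1}},\Theta_{2}^{W_{2}}}$ on a dense set, and since the left-hand side is bounded (a product of bounded operators), the MATTO on the right is bounded, i.e. lies in $\mathcal{MT}(\Theta_{1}^{W_{1}},\Theta_{2}^{W_{2}})$.

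Finally I would note that the converse direction is immediate by symmetry: the Crofoot transform is invertible with $\bigl(J_{W}^{\Theta}\bigr)^{-1}=\bigl(J_{W}^{\Theta}\bigr)^{*}$ again a Crofoot-type transform (one checks $\Theta^{W}$ has the reverse Crofoot parameter, or simply uses unitarity), so applying the already-proved implication to $B=J_{W_{2}}^{\Theta_{2}}A\bigl(J_{W_{1}}^{\Theta_{1}}\bigr)^{*}$ with the roles of $\Theta_{i}$ and $\Theta_{i}^{W_{i}}$ interchanged recovers $A=\bigl(J_{W_{2}}^{\Theta_{2}}\bigr)^{*}B\,J_{W_{1}}^{\Theta_{1}}\in\mathcal{MT}(\Theta_{1},\Theta_{2})$. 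The symbol formula is then read off from the algebraic identity; one may also sanity-check it against the scalar case in \cite{blicharz1} and the symmetric case $\Theta_{1}=\Theta_{2}$, $W_{1}=W_{2}$ in \cite{RK}, where $\Psi$ should collapse to the known expression.
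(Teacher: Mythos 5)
The paper itself states this theorem without proof (it is quoted from \cite{RK} for $\Theta_1=\Theta_2$ and \cite{blicharz1} for the scalar case), so there is no in-paper argument to compare against; judged on its own, your proposal has a genuine gap. Both of your load-bearing steps treat $J_{W}^{\Theta}=M_{G}$, $G(z)=D_{W^{*}}(I-\Theta(z)W^{*})^{-1}$, as a unitary of all of $L^{2}(\hsp)$, which it is not: $G(z)^{*}G(z)=(I-W\Theta(z)^{*})^{-1}(I-WW^{*})(I-\Theta(z)W^{*})^{-1}\neq I_{\hsp}$ a.e.\ (already in the scalar case $|G|^{2}=(1-|w|^{2})/|1-\bar{w}\theta|^{2}$ is not constantly $1$); only the restriction $M_{G}|_{K_{\Theta}}:K_{\Theta}\to K_{\Theta^{W}}$ is unitary. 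Consequently: (i) the operator $(J_{W_{1}}^{\Theta_{1}})^{*}$ in the theorem must be the adjoint of that restriction, i.e.\ its inverse $M_{G_{1}^{-1}}|_{K_{\Theta_{1}^{W_{1}}}}$, not ``multiplication by the conjugate-transpose symbol'' $M_{G_{1}^{*}}$ (with the latter reading, $A$ could not even be applied to $(J_{W_{1}}^{\Theta_{1}})^{*}f$, since $G_{1}^{*}f\notin K_{\Theta_{1}}$). Since $G_{1}^{*}\neq G_{1}^{-1}$ pointwise, your proposed simplification of $(I-\Theta_{1}W_{1}^{*})^{-*}D_{W_{1}^{*}}$ into $D_{W_{1}^{*}}(I+\Theta_{1}^{W_{1}}W_{1}^{*})^{-1}$ is false; the true identity, equivalent to $(J_{W}^{\Theta})^{-1}=J_{-W}^{\Theta^{W}}$, is $(I-\Theta_{1}W_{1}^{*})D_{W_{1}^{*}}^{-1}=D_{W_{1}^{*}}(I+\Theta_{1}^{W_{1}}W_{1}^{*})^{-1}$, i.e.\ it identifies $G_{1}^{-1}$, not $G_{1}^{*}$. (ii) The intertwining $J_{W_{2}}^{\Theta_{2}}P_{\Theta_{2}}=P_{\Theta_{2}^{W_{2}}}J_{W_{2}}^{\Theta_{2}}$ on $L^{2}(\hsp)$ fails, and your justification is wrong on both counts: $M_{G_{2}}$ is not unitary on $L^{2}(\hsp)$, and it does not carry $\Theta_{2}H^{2}(\hsp)$ into $\Theta_{2}^{W_{2}}H^{2}(\hsp)$ --- for $\theta(z)=z$, $w=1/2$ one has $G(z)\,z=\tfrac{\sqrt{3}\,z/2}{1-z/2}\notin\theta^{w}H^{2}$ because $z/(z-\tfrac12)$ has a pole in $\mathbb{D}$. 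The vector you feed into this intertwining, $M_{\Phi}(J_{W_{1}}^{\Theta_{1}})^{*}f$, is a general element of $L^{2}(\hsp)$, so the middle equality in your displayed chain cannot be rescued by restricting to $K_{\Theta_{2}}$.

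The fix is to avoid commuting $J_{W_{2}}^{\Theta_{2}}$ past $P_{\Theta_{2}}$ altogether: pair against $g\in K_{\Theta_{2}^{W_{2}}}^{\infty}$, use that the adjoint of each unitary restriction is its inverse $J_{-W_{i}}^{\Theta_{i}^{W_{i}}}=M_{G_{i}^{-1}}$ with $G_{i}^{-1}=D_{W_{i}^{*}}(I+\Theta_{i}^{W_{i}}W_{i}^{*})^{-1}$, and compute $\langle \Phi G_{1}^{-1}f,\,G_{2}^{-1}g\rangle_{L^{2}(\hsp)}$. Note that this computation naturally produces the symbol $(G_{2}^{*})^{-1}\Phi G_{1}^{-1}$, which differs pointwise from the stated $\Psi=G_{2}\Phi G_{1}^{-1}$; reconciling the two (they must differ by a symbol of the zero operator in $\mathcal{MT}(\Theta_{1}^{W_{1}},\Theta_{2}^{W_{2}})$) is an additional step your outline does not anticipate. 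Your converse direction via $(J_{W}^{\Theta})^{-1}=J_{-W}^{\Theta^{W}}$ and the transfer of boundedness by unitarity of the restrictions are fine.
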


Recall that if $\Theta\in H^{\infty}(\oL )$ is an inner function, then so is
$\widetilde{\Theta}(z)=\Theta(\bar z)^*.$
Let us now consider the map $\tau_{\Theta}:L^2(\hsp)\to L^2(\hsp)$ defined for $f\in L^2(\hsp)$ by
\begin{equation}
\label{tau}(\tau_{\Theta}f)(z)=\bar z\Theta(\bar z)^*f(\bar z)=\bar z\widetilde{\Theta}( z)f(\bar z)\quad\text{a.e. on }\mathbb{T}.
\end{equation}
The map $\tau_{\Theta}$ is an isometry and its adjoint $\tau_{\Theta}^*=\tau_{\widetilde{\Theta}}$ is also its inverse. Hence $\tau_{\Theta}$ is unitary. Moreover, it is easy to verify that
$$\tau_{\Theta}(\Theta H^2(\hsp))\subset H^2(\hsp)^{\perp}\quad\text{and}\quad \tau_{\Theta}( H^2(\hsp)^{\perp})\subset \widetilde{\Theta} H^2(\hsp),$$
which implies that
$ \tau_{\Theta}(K_{\Theta})=K_{\widetilde{\Theta}}.$
\begin{theorem}\label{ttau}
	Let $\Theta_{1},\Theta_{2}\in H^{\infty}(\oL)$ be two pure inner functions. A bounded linear operator $A:K_{\Theta_1}\to K_{\Theta_2}$ belongs to $\mathcal{MT}(\Theta_{1}, \Theta_{2})$ if and only if $\tau_{\Theta_2}A\,\tau_{\Theta_1}^{*}$ belongs to $\mathcal{MT}(\widetilde{\Theta}_{1}, \widetilde{\Theta}_{2})$. More precisely, $A=A_{\Phi}^{\Theta_{1}, \Theta_{2}}\in \mathcal{MT}(\Theta_{1}, \Theta_{2})$ if and only if $\tau_{\Theta_2}A\,\tau_{\Theta_1}^{*}=A_{\Psi}^{\widetilde{\Theta}_{1}, \widetilde{\Theta}_{2}}\in\mathcal{MT}(\widetilde{\Theta}_{1}, \widetilde{\Theta}_{2})$ with
	\begin{equation}\label{symbda}
	\Psi(z)=\Theta_2(\bar z)^*\Phi(\bar z)\Theta_1(\bar z)=\widetilde{\Theta}_2( z)\Phi(\bar z)\widetilde{\Theta}_1( z)^*\quad\text{a.e. on }\mathbb{T}.
	\end{equation}
\end{theorem}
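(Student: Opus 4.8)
The plan is to verify the identity $\tau_{\Theta_2}A_{\Phi}^{\Theta_1,\Theta_2}\tau_{\Theta_1}^{*}=A_{\Psi}^{\widetilde{\Theta}_1,\widetilde{\Theta}_2}$ on the dense subspace $K_{\widetilde{\Theta}_1}^{\infty}$ of $K_{\widetilde{\Theta}_1}$ by a direct pointwise computation on $\mathbb{T}$, and then to transport boundedness through the unitaries $\tau_{\Theta_1},\tau_{\Theta_2}$. First recall that $\tau_{\Theta_1}^{*}=\tau_{\widetilde{\Theta}_1}$ and that $\widetilde{\Theta}_1$ is again a pure inner function with $\widetilde{\widetilde{\Theta}_1}=\Theta_1$. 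Hence for $g\in K_{\widetilde{\Theta}_1}^{\infty}$ the function $f:=\tau_{\Theta_1}^{*}g$ is given a.e.\ on $\mathbb{T}$ by $f(z)=\bar z\,\Theta_1(z)\,g(\bar z)$; it is bounded and lies in $\tau_{\widetilde{\Theta}_1}(K_{\widetilde{\Theta}_1})=K_{\Theta_1}$, hence in $K_{\Theta_1}^{\infty}$, so that $A_{\Phi}^{\Theta_1,\Theta_2}f$ is defined and $\tau_{\Theta_2}A_{\Phi}^{\Theta_1,\Theta_2}\tau_{\Theta_1}^{*}g$ makes sense.

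Next I would compute, using $(\Phi f)(w)=\bar w\,\Phi(w)\,\Theta_1(w)\,g(\bar w)$ and the definition \eqref{tau} of $\tau$, that a.e.\ on $\mathbb{T}$
\[
\bigl(\tau_{\Theta_2}(\Phi f)\bigr)(z)=\bar z\,\Theta_2(\bar z)^{*}(\Phi f)(\bar z)=\bar z\,\Theta_2(\bar z)^{*}\,z\,\Phi(\bar z)\,\Theta_1(\bar z)\,g(z)=\Theta_2(\bar z)^{*}\Phi(\bar z)\Theta_1(\bar z)\,g(z)=\Psi(z)g(z),
\]
where $z\bar z=1$ was used; the two expressions for $\Psi$ in \eqref{symbda} coincide because $\widetilde{\Theta}_2(z)=\Theta_2(\bar z)^{*}$ and $\widetilde{\Theta}_1(z)^{*}=\Theta_1(\bar z)$. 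Since the boundary values $\Theta_1(\bar z),\Theta_2(\bar z)$ are unitary and the substitution $z\mapsto\bar z$ preserves $m$, one has $\|\Psi\|_{L^{2}(\oL)}=\|\Phi\|_{L^{2}(\oL)}$, so $\Psi\in L^{2}(\oL)$.

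It then remains to move $P_{\Theta_2}$ past $\tau_{\Theta_2}$. The map $\tau_{\Theta_2}$ is unitary and sends $K_{\Theta_2}$ onto $K_{\widetilde{\Theta}_2}$; using the decomposition $L^{2}(\hsp)=(H^{2}(\hsp))^{\perp}\oplus K_{\Theta_2}\oplus\Theta_2 H^{2}(\hsp)$ together with $\tau_{\Theta_2}(\Theta_2 H^{2}(\hsp))=(H^{2}(\hsp))^{\perp}$ and $\tau_{\Theta_2}((H^{2}(\hsp))^{\perp})=\widetilde{\Theta}_2 H^{2}(\hsp)$ (the inclusions recorded before the theorem are equalities, by unitarity), it also sends $K_{\Theta_2}^{\perp}$ onto $K_{\widetilde{\Theta}_2}^{\perp}$, whence $\tau_{\Theta_2}P_{\Theta_2}=P_{\widetilde{\Theta}_2}\tau_{\Theta_2}$. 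Combining this with the previous paragraph gives $\tau_{\Theta_2}A_{\Phi}^{\Theta_1,\Theta_2}\tau_{\Theta_1}^{*}g=P_{\widetilde{\Theta}_2}(\Psi g)=A_{\Psi}^{\widetilde{\Theta}_1,\widetilde{\Theta}_2}g$ for all $g\in K_{\widetilde{\Theta}_1}^{\infty}$. Because $\tau_{\Theta_1},\tau_{\Theta_2}$ are unitary, $A_{\Phi}^{\Theta_1,\Theta_2}$ extends to a bounded operator on $K_{\Theta_1}$ if and only if $A_{\Psi}^{\widetilde{\Theta}_1,\widetilde{\Theta}_2}$ extends to a bounded operator on $K_{\widetilde{\Theta}_1}$, and then the bounded extensions, agreeing with $A_{\Phi}^{\Theta_1,\Theta_2}$ resp.\ $A_{\Psi}^{\widetilde{\Theta}_1,\widetilde{\Theta}_2}$ on a dense set, satisfy $\tau_{\Theta_2}A_{\Phi}^{\Theta_1,\Theta_2}\tau_{\Theta_1}^{*}=A_{\Psi}^{\widetilde{\Theta}_1,\widetilde{\Theta}_2}$; this is the "only if" direction together with \eqref{symbda}. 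For the "if" direction, suppose $A\colon K_{\Theta_1}\to K_{\Theta_2}$ is bounded with $\tau_{\Theta_2}A\tau_{\Theta_1}^{*}=A_{\Psi}^{\widetilde{\Theta}_1,\widetilde{\Theta}_2}\in\mathcal{MT}(\widetilde{\Theta}_1,\widetilde{\Theta}_2)$; then $A=\tau_{\widetilde{\Theta}_2}A_{\Psi}^{\widetilde{\Theta}_1,\widetilde{\Theta}_2}\tau_{\widetilde{\Theta}_1}^{*}$, and applying the "only if" direction to the pair $\widetilde{\Theta}_1,\widetilde{\Theta}_2$ (and $\widetilde{\widetilde{\Theta}_i}=\Theta_i$) yields $A=A_{\Phi}^{\Theta_1,\Theta_2}\in\mathcal{MT}(\Theta_1,\Theta_2)$ with $\Phi(z)=\Theta_2(z)\Psi(\bar z)\Theta_1(z)^{*}$.

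The argument is elementary throughout; the only points needing a little care are the bookkeeping of the conjugations $z\mapsto\bar z$ and of the adjoints hidden inside $\tau$, and the verification that $\tau_{\Theta_2}$ carries $K_{\Theta_2}^{\perp}$ (taken in $L^{2}(\hsp)$) onto $K_{\widetilde{\Theta}_2}^{\perp}$, equivalently that the inclusions stated just before the theorem are in fact equalities — which is immediate from unitarity of $\tau_{\Theta_2}$. I do not expect any essential obstacle beyond this.
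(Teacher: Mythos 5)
Your proposal is correct and follows essentially the same route as the paper: the same pointwise computation producing $\Psi(z)=\Theta_2(\bar z)^*\Phi(\bar z)\Theta_1(\bar z)$ on the dense subspace $K_{\widetilde{\Theta}_1}^{\infty}$, and the same bootstrapping of the converse by applying the forward direction to $\widetilde{\Theta}_1,\widetilde{\Theta}_2$. The only cosmetic difference is that you establish the operator identity strongly via the intertwining $\tau_{\Theta_2}P_{\Theta_2}=P_{\widetilde{\Theta}_2}\tau_{\Theta_2}$, whereas the paper verifies it weakly by pairing against test functions $g\in K_{\widetilde{\Theta}_2}^{\infty}$, which lets the projection drop out automatically.
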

\begin{proof}
	Let $A:K_{\Theta_1}\to K_{\Theta_2}$ be a bounded linear operator. Assume that $A=A_{\Phi}^{\Theta_{1}, \Theta_{2}}\in \mathcal{MT}(\Theta_{1}, \Theta_{2})$ with some $\Phi\in L^2(\oL)$, and take $f\in K_{\widetilde{\Theta}_1}^{\infty}$ and $g\in K_{\widetilde{\Theta}_2}^{\infty}$. Note that $\tau_{\widetilde{\Theta}_1}f\in K_{{\Theta}_1}^{\infty}$ and $\tau_{\widetilde{\Theta}_2}g\in K_{{\Theta}_2}^{\infty}$. Therefore
	\begin{align*}
	\langle \tau_{\Theta_2}A\,\tau_{\Theta_1}^{*}f,g \rangle_{L^2(\hsp)}&=\langle A_{\Phi}^{\Theta_{1}, \Theta_{2}}\tau_{\widetilde{\Theta}_1}f,\tau_{{\Theta}_2}^*g \rangle_{L^2(\hsp)}\\&=\langle {\Phi}\,\tau_{\widetilde{\Theta}_1}f,\tau_{{\Theta}_2}^*g \rangle_{L^2(\hsp)}=\langle \tau_{{\Theta}_2}({\Phi}\,\tau_{\widetilde{\Theta}_1}f),g \rangle_{L^2(\hsp)}\\
	&=\int_{\mathbb{T}}\langle \bar z\widetilde{\Theta}_2(z)({\Phi}\,\tau_{\widetilde{\Theta}_1}f)(\bar z),g(z) \rangle_{\hsp}\,dm(z)\\
	&=\int_{\mathbb{T}}\langle \bar z\widetilde{\Theta}_2(z){\Phi}(\bar z)z{\Theta}_1(\bar z)f( z),g(z) \rangle_{\hsp}\,dm(z)\\	
	&=\int_{\mathbb{T}}\langle  \Psi( z)f( z),g(z) \rangle_{\hsp}\,dm(z)=	\langle A_{\Psi}^{\widetilde{\Theta}_{1}, \widetilde{\Theta}_{2}}f,g \rangle_{L^2(\hsp)}
	\end{align*}
	with $\Psi\in L^2(\oL)$ given by \eqref{symbda}.
	
	Now, if $\tau_{\Theta_2}A\,\tau_{\Theta_1}^{*}=A_{\Psi}^{\widetilde{\Theta}_{1}, \widetilde{\Theta}_{2}}\in\mathcal{MT}(\widetilde{\Theta}_{1}, \widetilde{\Theta}_{2})$ for some $\Psi\in L^2(\oL)$, then  $A=\tau_{\widetilde{\Theta}_2}A_{\Psi}^{\widetilde{\Theta}_{1}, \widetilde{\Theta}_{2}}\tau_{\widetilde{\Theta}_1}^{*}$ and by the first part of the proof  $A=A_{\Phi}^{\Theta_{1}, \Theta_{2}}\in \mathcal{MT}(\Theta_{1}, \Theta_{2})$ with
	\begin{equation}\label{wykwyk}
	\begin{split}
	\Phi(z) =\widetilde{\Theta}_2(\bar z)^*\Psi(\bar z)\widetilde{\Theta}_1(\bar z)={\Theta}_2( z)\Psi(\bar z){\Theta}_1( z)^*\quad\text{a.e. on }\mathbb{T}.
	\end{split}
	\end{equation}
	Hence $\Psi( z)={\Theta}_2( \bar z)^*\Phi(\bar z){\Theta}_1( \bar z)$ and \eqref{symbda} is satisfied.
\end{proof}

Denote
$\widetilde{D}_{\Theta}= I - S_{\Theta}^*S_{\Theta}.$
Applying Theorem \ref{ttau} to the model operator $S_{\Theta}$ we obtain
\begin{equation}\label{sz}
\tau_{\Theta}S_{\Theta}\tau_{\Theta}^*=\tau_{\Theta}S_{\Theta}\tau_{\widetilde{\Theta}}=S_{\widetilde{\Theta}}^*
\end{equation}
(see \cite[p. 1001]{KT}). It follows that
\begin{equation}\label{ddd}
\widetilde{D}_{\Theta}=\tau_{\widetilde{\Theta}}{D}_{\widetilde{\Theta}}\tau_{\Theta}=\tau_{\widetilde{\Theta}}{D}_{\widetilde{\Theta}}\tau_{\widetilde{\Theta}}^*
\end{equation}
and by \eqref{ect},
$$\widetilde{D}_{\Theta}f=\tau_{\widetilde{\Theta}}\big(\ke_0^{\widetilde{\Theta}}(\tau_{\Theta}f)(0)\big)\quad\text{for all }f\in K_{\Theta}.$$
For $\lambda\in\mathbb{D}$ let $\widetilde{\ke}_{\lambda}^{\Theta}x=\tau_{\widetilde{\Theta}}(\ke_{\bar\lambda}^{\widetilde{\Theta}}x)$, $x\in\hsp$. Then (a.e. on $\mathbb{T}$)
\begin{align*}
\widetilde{\ke}_{\lambda}^{\Theta}(z)x&=\tau_{\widetilde{\Theta}}(\ke_{\bar\lambda}^{\widetilde{\Theta}}(z)x)
=\tfrac{1}{z-\lambda}(\Theta(z)-{\Theta}( \lambda))x\in K_{\Theta}.
\end{align*}
In particular,
$$\widetilde{\ke}_{0}^{\Theta}(z)x=\bar z(\Theta(z)-{\Theta}(0))x$$
and
$$\widetilde{D}_{\Theta}f=\widetilde{\ke}_0^{{\Theta}}(\tau_{\Theta}f)(0)\in\widetilde{\mathcal{D}}_{\Theta},$$
where
$$\widetilde{\mathcal{D}}_{\Theta}=\{ \widetilde{\ke}_0^{{\Theta}}x: x\in \hsp\}=\{ \bar{z}(\Theta(z)-\Theta(0))x: x\in \hsp\}.$$

Observe that for $f\in K_{\Theta}$, $x\in\hsp$,
\begin{align*}
\langle f, \widetilde{\ke}_{\lambda}^{\Theta}x\rangle_{L^{2}(\hsp)}&=
\langle f,\tau_{\widetilde{\Theta}}(\ke_{\bar\lambda}^{\widetilde{\Theta}}x)\rangle_{L^{2}(\hsp)}
=\langle \tau_{\Theta}f,\ke_{\bar\lambda}^{\widetilde{\Theta}}x\rangle_{L^{2}(\hsp)}=\langle (\tau_{\Theta}f)(\bar\lambda),x\rangle_{\hsp}.
\end{align*}
It follows that for $f\in K_{\Theta}$ we have $M_zf\in K_{\Theta}$ if and only if $f\perp\widetilde{\mathcal{D}}_{\Theta}$. Indeed, $M_zf\in K_\Theta$ if and only if $\Theta P_+(\Theta^{*}M_zf))=0$. Since
$$(\Theta^{*}M_zf)(z)=\Theta(z)^*zf(z)=(\tau_{\Theta}f)(\overline{z}),$$
we have $P_+(\Theta^{*}M_zf))=(\tau_{\Theta}f)(0)$ and so $M_{z}f\in K_{\Theta}$ if and only if
$$0=\langle(\tau_{\Theta}f)(0),x\rangle=\langle f, \widetilde{\ke}_{0}^{\Theta}x\rangle_{L^{2}(\hsp)}\quad \text{for every}\quad x\in \hsp,$$
i.e, $f\perp\widetilde{\mathcal{D}}_{\Theta}$. Therefore
$$(S_{\Theta}f)(z)=\left\{\begin{array}{cl}
 z f(z)&\text{for }f\perp\widetilde{\mathcal{D}}_{\Theta},\\
-\big(I_{\hsp}-\Theta(z)\Theta(0)^*\big)\Theta(0)x&\text{for }f=\widetilde{\ke}_{0}^{\Theta}x\in\widetilde{\mathcal{D}}_{\Theta}.
\end{array}\right.$$
Hence
$$\widetilde{{D}}_{\Theta}f=\left\{\begin{array}{cl}
0&\text{for }f\perp\widetilde{\mathcal{D}}_{\Theta},\\
\widetilde{\ke}_{0}^{\Theta}(I_{\hsp}-\Theta(0)\Theta(0)^*)x&\text{for }f=\widetilde{\ke}_{0}^{\Theta}x\in\widetilde{\mathcal{D}}_{\Theta}.
\end{array}\right.$$~\label{d}

A conjugation $J$ in a Hilbert space
$\hsp$ is an antilinear map $J:\hsp\longrightarrow \hsp$ such that $J^2=I_\hsp$ and
$\langle Jf,Jg\rangle=\langle g, f\rangle \quad \text{for all}\quad f,g\in \hsp$.
Recall that a bounded linear operator $T:\hsp\longrightarrow\hsp$ is said to be $J$-symmetric ($J$ being a conjugation on $\hsp$) if $JTJ=T^*$. We say that $T$ is complex symmetric if it is $J$-symmetric with respect
to some conjugation J (see, e.g., \cite{GP} for more details on conjugations
and complex symmetric operators).

In \cite{CKLP} the authors consider certain classes of conjugations in $L^2(\hsp)$. One such conjugation is $\J^*:L^2(\hsp)\to L^2(\hsp)$ defined for a fixed conjugation $J$ in $\hsp$ by
\begin{equation}\label{J}
(\J^*f)(z)=J(f(\overline{z}))\quad \text{a.e. on}~~\mathbb{T}.
\end{equation}
It is not difficult to verify that for $f(z)=\sum_{n=-\infty}^{\infty}a_nz^n\in L^{2}(\oL)$ we have
$$(\J^*f)(z)=\sum_{n=-\infty}^{\infty}J(a_n)z^n.$$
Hence, $\J^*$ is an $M_z$-commuting conjugation, i.e, $\J^*M_z=M_z\J^*$, and $\J^*(H^{2}(\hsp))=H^{2}(\hsp)$, $\J^*P_+=P_+\J^*$ (see \cite[Section 4]{CKLP}).

For $\F\in L^\infty(\oL)$ and an arbitrary conjugation $J$ in $\hsp$ let
\begin{equation}\label{F}
\F_J(z)=J\F(z)J\quad \text{a.e on}~~~ \mathbb{T}.
\end{equation}
Then $\F_J\in L^\infty(\oL)$. As observed in \cite{CKLP}, $\F_J\in H^\infty(\oL)$ if and only if $\F\in H^\infty(\oL)$, and $\F_J$ is an inner function if and only if $\F$ is. Clearly, $(\F_J)_J=\F.$ Let us also observe that if $\F$ is $J$-symmetric, that is, $J\F(z)J=\F(z)^*$ a.e on $\mathbb{T}$ (or equivalently $\F(\lambda)$ is $J$-symmetric for $\lambda$ in $\mathbb{D}$, see \cite{CKLP}), then $\F_J=\widetilde{\F}$, where $\widetilde{\F}(z)=\F(\bar z)^{*}$.
Note that $\F_J$ is also defined for $\F\in L^{2}(\oL)$ and
\begin{equation}\label{JM}
\J^*M_\F=M_\F\J^*.
\end{equation}
\begin{proposition}\cite{CKLP}\label{a,b,c,d}
Let $\Theta\in H^\infty(\oL)$ be a pure inner function and let $J$ be a conjugation \text{on} $\hsp$. Then
\end{proposition}
\begin{enumerate}[(a)]
		\item $\J^*(\Theta H^{2}(\hsp))=\Theta_JH^2(\hsp)$;\label{a}
		\item $\J^*P_\Theta=P_{\Theta_J} \J^*$;\label{b}
\item $\J^*(K_\Theta)=K_{\Theta_J}$;\label{c}
\item $\J^*(\ke_{\lambda}^{\Theta}x)=\ke_{\overline{\lambda}}^{\Theta_{\J}}Jx$.\label{d}
\end{enumerate}

\begin{theorem}\label{4.4}
Let $\Theta_1,\Theta_2\in H^\infty(\oL)$ be two pure inner functions and let $J_1, J_2$ be two conjugations \text{on} $\hsp$. A bounded linear operator $A:K_{\Theta_1}\to K_{\Theta_2}$ belongs to $\mathcal{MT}(\Theta_1, \Theta_2)$ if and only if $\J_2^*A\J_1^*$ belongs to $\mathcal{MT}((\Theta_1)_{J_1}, (\Theta_2)_{J_2})$. More precisely, $A=A_\Phi^{\Theta_{1}, \Theta_2}\in \mathcal{MT}(\Theta_1, \Theta_2)$ if and only if
$\J_2^*A\J_1^*=A_\Psi^{(\Theta_{1})_{J_1}, (\Theta_2)_{J_2}}\in \mathcal{MT}((\Theta_1)_{J_1}, (\Theta_2)_{J_2})$ with
\begin{equation}\label{equi}
\Psi(z)=J_2\Phi(\overline{z})J_1 \quad\text{a.e. on } ~~\mathbb{T}.
\end{equation}
\end{theorem}
\begin{proof}
Assume that $A=A_\Phi^{\Theta_{1}, \Theta_2}\in\mathcal{MT}(\Theta_1, \Theta_2)$ with $\Phi\in L^2(\oL)$. Let $f\in K_{(\Theta_1)_{J_1}}^\infty$. Note that $\J_1^*f\in K_{\Theta_1}^\infty$. Therefore, by Proposition \ref{a,b,c,d}\eqref{b} and \eqref{JM},
\begin{align*}
\J_2^*A\J_1^* f
&=\J_2^*P_{\Theta_2}M_{\Phi}\J_1^*f=P_{(\Theta_2)_{J_2}}\J_2^*M_\Phi\J_1^*f\\
&=P_{(\Theta_2)_{J_2}}M_\Psi f=A_{\Psi}^{(\Theta_1)_{J_1},(\Theta_2)_{J_2}}f
\end{align*}
with $\Psi$ given by $\eqref{equi}$. Thus $\J_2^*A\J_1^*\in \mathcal{MT}((\Theta_1)_{J_1}, (\Theta_2)_{J_2})$.

 On the other hand, if
$A=\J_2^*A_\Psi^{(\Theta_{1})_{J_1}, (\Theta_2)_{J_2}}\J_1^*\in\mathcal{MT}((\Theta_1)_{J_1}, (\Theta_2)_{J_2})$  with some $\Psi\in L^2(\oL)$, then $A=\J_2^*A_{\Psi}^{(\Theta_1)_{J_1},(\Theta_2)_{J_2}}\J_1^* $ and as above, $A=A_{\Phi}^{\Theta_1,\Theta_2}$ with
$$\Phi(z)=J_2\Psi(z)J_1\quad \text{a.e. on}~~ \mathbb{T}.$$
\end{proof}

In the scalar case each model space $K_{\theta}$ is equipped with a natural conjugation $C_{\theta}$ defined in terms of boundary functions by
$(C_\theta f)(z)=\theta(z)\overline{z}\overline{f(z)}$. If $\Theta\in H^\infty(\oL)$ is an inner function and $J$ is a conjugation in $\hsp$ we can similarly define $\C_{\Theta}^J:L^2(\hsp)\to L^2(\hsp)$ by
$$(\C_{\Theta}^Jf)(z)=\Theta(z)\overline{z} J(f(z))\quad \text{a.e. on }~~\mathbb{T}.$$
It is not in general an involution. A simple computation shows that $\C_{\Theta}^J$ is a conjugation if and only if $\Theta$ is $J$-symmetric.
Furthermore,
$\C_{\Theta}^J(K_\Theta)=K_\Theta$ and
$\C_{\Theta}^J=\J^*\tau_\Theta.$

By Theorem \ref{ttau} and Theorem \ref{4.4} we get the following.

\begin{theorem}\label{4.5}
Let $\Theta_1, \Theta_2\in H^{\infty}(\oL)$ be two pure inner functions and let $J_1, J_2$ be two conjugations in $\hsp$ such that $\Theta_1$ is $J_1$-symmetric and $\Theta_2$ is $J_2$-symmetric. A bounded linear operator $A:K_{\Theta_1}\to K_{\Theta_2}$ belongs to $\mathcal{MT}(\Theta_1, \Theta_2)$ if and only if $C_{\Theta_2}^{J_2}AC_{\Theta_1}^{J_1}$ belongs to $\mathcal{MT}(\Theta_1, \Theta_2)$. More precisely, $A=A_{\Phi}^{\Theta_1, \Theta_2}\in \mathcal{MT}(\Theta_1, \Theta_2)$ if and only if $C_{\Theta_2}^{J_2}AC_{\Theta_1}^{J_1}=A_{\Psi}^{\Theta_1,\Theta_2}\in \mathcal{MT}(\Theta_1, \Theta_2)$ with
\begin{equation}
\Psi(z)=J_2\Theta_2(z)^*\Phi(z)\Theta_1(z)J_1
=\Theta_2(z)J_2\Phi(z)J_1\Theta_1(z)^*\quad \text{a.e. on } \mathbb{T}
\end{equation}
\end{theorem}
For the scalar version of Theorem \ref{4.5} see \cite{BM}.
\begin{remark}
Recall that in the scalar case $\hsp=\mathbb{C}$ every TTO on the model space $K_\theta$ is $C_\theta$-symmetric, i.e.,
$$C_\theta A_\varphi^\theta C_\theta=(A_\varphi^\theta)^*=A_{\overline{\varphi}}^\theta$$
(see, e.g., \cite{sar}). In the vector valued case, the equality
\begin{equation}\label{eq: 4.12}
C_{\Theta}^J A_{\Phi}^\theta C_{\Theta}^J=A_{\Phi^*}^\Theta.
\end{equation}
is not necessarily true for an arbitrary $\Phi\in L^2(\oL)$ (even though we assume here that $\Theta$ is $J$-symmetric). It is however satisfied if also $\Phi$ is $J$-symmetric and \text{commutes} with $\Theta$ (see \cite{KT}).
\end{remark}

\section{Characterizations with compressed shift operators}\label{s5}

In \cite{KT}(see Theorem 5.2 and Remark 5.4) characterizations of matrix valued truncated Toeplitz operators in
$\mathcal{MT}(\Theta)$ were given by using the model operators $S_{\Theta}$, $S_{\Theta}^*$ and the defect operators $D_{\Theta}$, $\widetilde{D}_{\Theta}$. These characterizations generalized D. Sarason's results \cite{sar}. Here we obtain analogous results for matrix valued asymmetric truncated Toeplitz operators from $\mathcal{MT}(\Theta_{1}, \Theta_{2})$. We use a reasoning analogous to that from \cite{KT} (see \cite{BM} for the scalar case).

\begin{lemma}\label{10}
	If $\Phi\in H^2((\mathcal{L}(\hsp))$, then
	 $$A_\Phi^{{\Theta_1},{\Theta_2}}-S_{\Theta_2}A_\Phi^{{\Theta_1},{\Theta_2}}S_{\Theta_1}^*=P_{\Theta_2}M_{\Phi}(I_{H^2(\hsp)}-SS^*)\quad\text{on }K_{\Theta_1}^{\infty}.$$
\end{lemma}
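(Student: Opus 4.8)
The plan is to check the identity by evaluating both sides on an arbitrary $f\in K_{\Theta_1}^{\infty}$, reducing everything to elementary manipulations inside $H^{2}(\hsp)$. First I would make sure that all the terms appearing are legitimate on the dense domain $K_{\Theta_1}^{\infty}$. Since $K_{\Theta_1}$ is $S^*$-invariant and $S_{\Theta_1}^{*}f=S^{*}f=\bar z\big(f-f(0)\big)$ is bounded and analytic, we get $S_{\Theta_1}^{*}f\in K_{\Theta_1}^{\infty}$; moreover, as $\Phi\in H^{2}(\oL)$ and $f,S_{\Theta_1}^{*}f\in H^{\infty}(\hsp)$ (recall $\dim\hsp<\infty$, so $\|\Phi(\cdot)\|_{\oL}\in L^{2}(\mathbb{T})$), both products $\Phi f$ and $\Phi S_{\Theta_1}^{*}f$ lie in $H^{2}(\hsp)$. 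Hence $A_{\Phi}^{\Theta_1,\Theta_2}f=P_{\Theta_2}(\Phi f)$ and
$$S_{\Theta_2}A_{\Phi}^{\Theta_1,\Theta_2}S_{\Theta_1}^{*}f=S_{\Theta_2}\big(P_{\Theta_2}(\Phi S_{\Theta_1}^{*}f)\big)=P_{\Theta_2}M_zP_{\Theta_2}\big(\Phi S_{\Theta_1}^{*}f\big)$$
are well-defined elements of $K_{\Theta_2}$.

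The one auxiliary fact I would isolate is: \emph{for every $g\in H^{2}(\hsp)$ one has $P_{\Theta_2}M_zP_{\Theta_2}g=P_{\Theta_2}(M_zg)$.} Indeed, $g-P_{\Theta_2}g\in H^{2}(\hsp)\ominus K_{\Theta_2}=\Theta_2H^{2}(\hsp)$, and this subspace is invariant under $M_z$, so $M_z\big(g-P_{\Theta_2}g\big)\in\Theta_2H^{2}(\hsp)$, which is orthogonal to $K_{\Theta_2}$ and hence annihilated by $P_{\Theta_2}$.

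Applying this with $g=\Phi S_{\Theta_1}^{*}f\in H^{2}(\hsp)$ and using $S_{\Theta_1}^{*}f=\bar z\big(f-f(0)\big)$ together with $z\bar z=1$ a.e. on $\mathbb{T}$, I obtain
$$S_{\Theta_2}A_{\Phi}^{\Theta_1,\Theta_2}S_{\Theta_1}^{*}f=P_{\Theta_2}M_z\big(\Phi\,\bar z(f-f(0))\big)=P_{\Theta_2}\big(\Phi(f-f(0))\big)=P_{\Theta_2}(\Phi f)-P_{\Theta_2}\big(\Phi f(0)\big),$$
where $\Phi f(0)=M_{\Phi}f(0)\in H^{2}(\hsp)$. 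Subtracting this from $A_{\Phi}^{\Theta_1,\Theta_2}f=P_{\Theta_2}(\Phi f)$ leaves exactly $P_{\Theta_2}\big(\Phi f(0)\big)=P_{\Theta_2}M_{\Phi}f(0)$, and since $(I_{H^{2}(\hsp)}-SS^{*})f=f(0)$ this equals $P_{\Theta_2}M_{\Phi}(I_{H^{2}(\hsp)}-SS^{*})f$, which is the claimed identity.

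I do not anticipate a genuine obstacle here; the only point requiring care is the bookkeeping of domains, i.e. keeping $\Phi f$, $\Phi S_{\Theta_1}^{*}f$ and all intermediate products inside $H^{2}(\hsp)$ so that the formula $S^{*}h=\bar z(h-h(0))$ and the identity $M_zM_{\bar z}=I$ on $L^{2}(\hsp)$ may be used freely. This is precisely where the hypothesis $\Phi\in H^{2}(\oL)$ — rather than merely $\Phi\in L^{2}(\oL)$ — is used, and where the $S^{*}$-invariance of $K_{\Theta_1}$ is needed so that the composition $S_{\Theta_2}A_{\Phi}^{\Theta_1,\Theta_2}S_{\Theta_1}^{*}$ is meaningful on $K_{\Theta_1}^{\infty}$.
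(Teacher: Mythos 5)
Your proof is correct and follows essentially the same route as the paper's: the key step in both is the identity $P_{\Theta_2}M_zP_{\Theta_2}=P_{\Theta_2}M_z$ on $H^2(\hsp)$ (via $M_z$-invariance of $\Theta_2H^2(\hsp)$), after which the difference collapses to $P_{\Theta_2}M_\Phi f(0)=P_{\Theta_2}M_\Phi(I_{H^2(\hsp)}-SS^*)f$. The only cosmetic difference is that you compute with the pointwise formula $S^*f=\bar z\big(f-f(0)\big)$ while the paper manipulates the operator products $M_zM_\Phi P_+M_{\bar z}=M_\Phi SS^*$.
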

\begin{proof}
	Recall that  $S_{\Theta}=P_{\Theta}M_{z|K_{\Theta}}$ and $S_{\Theta}^*=P_+M_{\bar z|K_{\Theta}}$. Hence, for $f\in K_{\Theta_1}^{\infty}$,	
	 $$A_\Phi^{{\Theta_1},{\Theta_2}}f-S_{\Theta_2}A_\Phi^{{\Theta_1},{\Theta_2}}S_{\Theta_1}^*f=P_{\Theta_2}M_{\Phi}f-P_{\Theta_2}M_{z}P_{\Theta_2}M_{\Phi}P_{\Theta_1}M_{\bar z}f$$
	(note that $S_{\Theta_1}^*f\in K_{\Theta_1}^{\infty}$). Since $P_{\Theta_2} M_zP_{\Theta_2}=P_{\Theta_2} M_z$ on $H^2(\hsp)$, we have
	\begin{align*}
		 A_\Phi^{{\Theta_1},{\Theta_2}}f-S_{\Theta_2}A_\Phi^{{\Theta_1},{\Theta_2}}S_{\Theta_1}^*f&=P_{\Theta_2}M_{\Phi}f-P_{\Theta_2}M_{z}M_{\Phi}P_+M_{\bar z}f \\
		&=P_{\Theta_2}(M_{\Phi}-M_{z}M_{\Phi}P_+M_{\bar z})f\\
		&=P_{\Theta_2}(M_{\Phi}-M_{\Phi}M_{z}P_+M_{\bar z})f\\
		&=P_{\Theta_2}M_{\Phi}(I_{H^2(\hsp)}-SS^*)f.
	\end{align*}
\end{proof}

Recall that
$$\mathcal{D}_{\Theta}=\{ (I_{\hsp}-\Theta(z)\Theta(0)^*)x : x\in \hsp\},\quad \widetilde{\mathcal{D}}_{\Theta}=\{ \bar{z}(\Theta(z)-\Theta(0))x: x\in \hsp \},$$
while the operator $\Omega_{\Theta}:\mathcal{D}_{\Theta}\rightarrow \hsp\subset H^2(\hsp)$ is defined by  $$\Omega_{\Theta}(\ke_0^{\Theta}x)=x.$$

\begin{theorem}\label{tcara}
	Let $ {\Theta_1},{\Theta_2} \in H^{\infty}(\oL)$ be two pure inner functions and let $A:K_{\Theta_{1}}\rightarrow  K_{\Theta_{2}}$ be a bounded linear operator. Then $A$ belongs to $\mathcal{MT}(\Theta_{1}, \Theta_{2})$  if and only if there exist bounded linear operators \mbox{$B_1:\mathcal{D}_{\Theta_{1}}\to K_{\Theta_{2}}$} and $B_2: \mathcal{D}_{\Theta_{2}}\to K_{\Theta_{1}}$, such that
	\begin{equation}\label{cara}
	A-S_{\Theta_2}AS_{\Theta_1}^*=B_1\dfc{1}+\dfc{2}B_2^*.
	\end{equation}
\end{theorem}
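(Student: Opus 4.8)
The plan is to reduce both directions to Lemma \ref{10} together with the splitting of a symbol into its analytic and co-analytic parts, following the symmetric case in \cite{KT}. Two facts will be used throughout: the identity $\Omega_{\Theta}D_{\Theta}f=f(0)=(I_{H^2(\hsp)}-SS^*)f$ from \eqref{wyk1}, and the fact that $\mathcal{D}_{\Theta_1}$ and $\mathcal{D}_{\Theta_2}$ are finite dimensional, so that \emph{every} linear map out of them is automatically bounded. I will also freely use the algebraic identities $S_{\Theta}^{k}h=P_{\Theta}(z^{k}h)$ and $D_{\Theta}(S_{\Theta}^{*})^{k}h=\ke_{0}^{\Theta}\cdot(k\text{-th Taylor coefficient of }h)$ valid for $h\in K_{\Theta}$ (the first as in the proof of Lemma \ref{10}, the second from \eqref{ect}), and $(A_{\Psi}^{\Theta_{1},\Theta_{2}})^{*}=A_{\Psi^{*}}^{\Theta_{2},\Theta_{1}}$.

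\emph{Necessity.} Let $A=A_{\Phi}^{\Theta_{1},\Theta_{2}}\in\mathcal{MT}(\Theta_{1},\Theta_{2})$ and write $\Phi=\Phi_{+}+\Phi_{-}$ with $\Phi_{+}\in H^{2}(\oL)$ and $\Phi_{-}\in[zH^{2}(\oL)]^{*}$, so $\Phi_{-}^{*}\in H^{2}(\oL)$. On $K_{\Theta_{1}}^{\infty}$ split $A-S_{\Theta_{2}}AS_{\Theta_{1}}^{*}$ into the contributions of $\Phi_{+}$ and $\Phi_{-}$. For the analytic part, Lemma \ref{10} gives $A_{\Phi_{+}}^{\Theta_{1},\Theta_{2}}-S_{\Theta_{2}}A_{\Phi_{+}}^{\Theta_{1},\Theta_{2}}S_{\Theta_{1}}^{*}=P_{\Theta_{2}}M_{\Phi_{+}}(I_{H^{2}(\hsp)}-SS^{*})=B_{1}D_{\Theta_{1}}$, where $B_{1}\colon\mathcal{D}_{\Theta_{1}}\to K_{\Theta_{2}}$ is $B_{1}h=P_{\Theta_{2}}M_{\Phi_{+}}(\Omega_{\Theta_{1}}h)$, a bounded map. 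Since $A-S_{\Theta_{2}}AS_{\Theta_{1}}^{*}$ is bounded, so is the $\Phi_{-}$-contribution; taking adjoints and using $(A_{\Psi}^{\Theta_{1},\Theta_{2}})^{*}=A_{\Psi^{*}}^{\Theta_{2},\Theta_{1}}$ and $D_{\Theta_{2}}^{*}=D_{\Theta_{2}}$, this contribution is the adjoint of $A_{\Phi_{-}^{*}}^{\Theta_{2},\Theta_{1}}-S_{\Theta_{1}}A_{\Phi_{-}^{*}}^{\Theta_{2},\Theta_{1}}S_{\Theta_{2}}^{*}$, which by Lemma \ref{10} (with $\Theta_{1}$ and $\Theta_{2}$ interchanged and the analytic symbol $\Phi_{-}^{*}$) equals $B_{2}D_{\Theta_{2}}$ for $B_{2}\colon\mathcal{D}_{\Theta_{2}}\to K_{\Theta_{1}}$, $B_{2}h=P_{\Theta_{1}}M_{\Phi_{-}^{*}}(\Omega_{\Theta_{2}}h)$. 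Hence the $\Phi_{-}$-contribution is $D_{\Theta_{2}}B_{2}^{*}$, and \eqref{cara} holds on the dense set $K_{\Theta_{1}}^{\infty}$, hence on $K_{\Theta_{1}}$.

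\emph{Sufficiency.} Given the bounded $B_{1},B_{2}$, define $\Phi_{+},\Phi_{-}^{*}\in H^{2}(\oL)$ column-wise by $\Phi_{+}(z)x=(B_{1}\ke_{0}^{\Theta_{1}}x)(z)$ and $\Phi_{-}^{*}(z)x=(B_{2}\ke_{0}^{\Theta_{2}}x)(z)$ (the columns lie in $K_{\Theta_{2}}\subset H^{2}(\hsp)$, resp.\ $K_{\Theta_{1}}\subset H^{2}(\hsp)$), and put $\Phi=\Phi_{+}+\Phi_{-}\in L^{2}(\oL)$. Writing $R=B_{1}D_{\Theta_{1}}+D_{\Theta_{2}}B_{2}^{*}$ and iterating the hypothesis \eqref{cara}, for $f\in K_{\Theta_{1}}^{\infty}$ and $g\in K_{\Theta_{2}}^{\infty}$,
$$\langle Af,g\rangle=\langle A(S_{\Theta_{1}}^{*})^{n}f,(S_{\Theta_{2}}^{*})^{n}g\rangle+\sum_{k=0}^{n-1}\langle S_{\Theta_{2}}^{k}R(S_{\Theta_{1}}^{*})^{k}f,g\rangle .$$
Since $S_{\Theta_{1}},S_{\Theta_{2}}$ are $C_{0}$ contractions, $\|(S_{\Theta_{2}}^{*})^{n}g\|\to0$, so the first term tends to $0$. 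For the $B_{1}D_{\Theta_{1}}$-part of the sum, $D_{\Theta_{1}}(S_{\Theta_{1}}^{*})^{k}f=\ke_{0}^{\Theta_{1}}a_{k}$ (where $f=\sum a_{j}z^{j}$), the definition of $\Phi_{+}$, and $S_{\Theta_{2}}^{k}h=P_{\Theta_{2}}(z^{k}h)$ on $K_{\Theta_{2}}$ give $\sum_{k=0}^{n-1}S_{\Theta_{2}}^{k}B_{1}D_{\Theta_{1}}(S_{\Theta_{1}}^{*})^{k}f=P_{\Theta_{2}}M_{\Phi_{+}}f^{(n)}$ with $f^{(n)}=\sum_{k<n}a_{k}z^{k}\to f$ in $L^{2}(\hsp)$; pairing with $g$ yields $\langle f^{(n)},M_{\Phi_{+}^{*}}g\rangle\to\langle M_{\Phi_{+}}f,g\rangle=\langle A_{\Phi_{+}}^{\Theta_{1},\Theta_{2}}f,g\rangle$. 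Pushing $D_{\Theta_{2}}$ onto $g$ in the $D_{\Theta_{2}}B_{2}^{*}$-part (using $D_{\Theta_{2}}(S_{\Theta_{2}}^{*})^{k}g=\ke_{0}^{\Theta_{2}}b_{k}$ and the definition of $\Phi_{-}^{*}$) gives, symmetrically, convergence to $\langle M_{\Phi_{-}}f,g\rangle=\langle A_{\Phi_{-}}^{\Theta_{1},\Theta_{2}}f,g\rangle$. Adding, $\langle Af,g\rangle=\langle A_{\Phi}^{\Theta_{1},\Theta_{2}}f,g\rangle$ for all such $f,g$; by density of $K_{\Theta_{2}}^{\infty}$ this gives $A=A_{\Phi}^{\Theta_{1},\Theta_{2}}$, which is then bounded, i.e.\ $A\in\mathcal{MT}(\Theta_{1},\Theta_{2})$.

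The delicate point is the sufficiency direction: the symbol $\Phi$ we construct is only in $L^{2}(\oL)$, not $L^{\infty}(\oL)$, so $\|M_{\Phi}(S_{\Theta_{1}}^{*})^{n}f\|$ need not be bounded in $n$ and one must not attempt to pass to the limit there. The device that makes the argument go through is the identification of the \emph{finite} partial sums $\sum_{k<n}S_{\Theta_{2}}^{k}R(S_{\Theta_{1}}^{*})^{k}f$ with $P_{\Theta_{2}}M_{\Phi_{\pm}}$ applied to the polynomial truncations $f^{(n)}$ (respectively paired against $g^{(n)}$), so that the only convergence invoked is $f^{(n)}\to f$ (respectively $g^{(n)}\to g$) in the $L^{2}(\hsp)$ norm, tested against the \emph{fixed} vector $M_{\Phi_{+}^{*}}g$ (respectively $M_{\Phi_{-}}f$); there the Cauchy--Schwarz inequality suffices. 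All remaining ingredients — the two algebraic identities on $K_{\Theta}$, boundedness of $B_{1},B_{2}$ from finite dimensionality, and $(A_{\Psi}^{\Theta_{1},\Theta_{2}})^{*}=A_{\Psi^{*}}^{\Theta_{2},\Theta_{1}}$ as densely defined operators — are routine.
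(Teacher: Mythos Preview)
Your proof is correct and follows essentially the same route as the paper's: split the symbol into analytic and co-analytic parts, apply Lemma~\ref{10} (directly for the analytic part, via adjoints for the co-analytic part) to obtain $B_1=P_{\Theta_2}M_{\Phi_+}\Omega_{\Theta_1}$ and $B_2=P_{\Theta_1}M_{\Phi_-^{*}}\Omega_{\Theta_2}$; conversely, define the symbol columnwise from $B_1,B_2$, iterate \eqref{cara}, and use $(S_{\Theta_i}^{*})^{n}\to 0$ together with the Fourier-coefficient identity $D_{\Theta}(S_{\Theta}^{*})^{k}h=\ke_{0}^{\Theta}a_{k}$ to identify the series with $\langle A_{\Phi}^{\Theta_1,\Theta_2}f,g\rangle$. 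The only cosmetic difference is that in the sufficiency direction you package the partial sums as $P_{\Theta_2}M_{\Phi_+}$ acting on polynomial truncations $f^{(n)}$ (and symmetrically for $g^{(n)}$), whereas the paper expands $\langle\Psi f,g\rangle=\sum_n\langle z^n\Psi a_n,g\rangle$ directly; these are the same computation, and your remark about only needing $f^{(n)}\to f$ tested against the fixed $L^2$ vector $M_{\Phi_+^{*}}g$ makes the convergence more transparent than the paper's version.
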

\begin{proof} The proof follows the same line of reasoning as the proof of Theorem 5.2 in \cite{KT}.
\end{proof}

\begin{corollary}\label{cep}
	Let $ {\Theta_1},{\Theta_2} \in H^{\infty}(\oL)$ be two pure inner functions and let $A:K_{\Theta_{1}}\rightarrow  K_{\Theta_{2}}$ be a bounded linear operator.
	\begin{enumerate}[(a)]
\item If $A=A_{\Psi+\Xi^*}^{{\Theta_1},{\Theta_2}}\in \mathcal{MT}(\Theta_{1}, \Theta_{2})$, then $A$ satisfies \eqref{cara} with
\begin{equation}\label{symmm} B_1=P_{\Theta_2}M_{\Psi}\Omega_{\Theta_1}\quad\text{and}\quad B_2=P_{\Theta_1}M_{\Xi}\Omega_{\Theta_2}.
\end{equation}
		\item If $A$ satisfies \eqref{cara}, then $A=A_{\Psi+\Xi^*}^{{\Theta_1},{\Theta_2}}\in \mathcal{MT}(\Theta_{1}, \Theta_{2})$ with
			\begin{equation}\label{symm} \Psi(z)x=\big(B_1\ke_{0}^{\Theta_{1}}x\big)(z)\quad\text{and}\quad\Xi(z)x=\big(B_2\ke_{0}^{\Theta_{2}}x\big)(z),\quad x\in \hsp.
			\end{equation}
\end{enumerate}
\end{corollary}

\begin{remark}\label{rmk}
\begin{enumerate}[(a)]
\item For an inner function $\Theta\in H^{\infty}(\oL)$ denote
$$\mathcal{M}_\Theta=H^2(\oL)\ominus \Theta H^2(\oL).$$
Therefore, if a bounded linear operator $A:K_{\Theta_1}\to K_{\Theta_2}$ satisfies \eqref{cara}, then $A=A_{\Psi+\Xi^*}^{\Theta_1, \Theta_2}\in \mathcal{MT}(\Theta_1, \Theta_2)$ with $\Psi\in \mathcal{M}_{\Theta_2}$ and $\Xi\in \mathcal{M}_{\Theta_1}$ given by \eqref{symm}.
\item Recall that $A_{\Phi}^{\Theta_1, \Theta_2}=0$ if and only if
$$\Phi\in \Theta_2 H^2(\oL)+(\Theta_1 H^2(\oL))^*$$
(see \cite{RK2}).
\end{enumerate}
\end{remark}
As in \cite{KT} we can use the unitary operator $\tau_\Theta$ defined by \eqref{tau} and obtain the following theorem.

\begin{theorem}\label{tca2}
	Let $ {\Theta_1},{\Theta_2} \in H^{\infty}(\oL)$ be two pure inner functions and let $A:K_{\Theta_{1}}\rightarrow  K_{\Theta_{2}}$ be a bounded linear operator. Then $A$ belongs to $\mathcal{MT}(\Theta_{1}, \Theta_{2})$  if and only if there exist bounded linear operators \mbox{$\widetilde{B}_1:\widetilde{\mathcal{D}}_{\Theta_{1}}\rightarrow K_{\Theta_{2}}$} and $\widetilde{B}_2: \widetilde{\mathcal{D}}_{\Theta_{2}}\rightarrow K_{\Theta_{1}}$, such that
	\begin{equation}\label{cara2}
	A-S_{\Theta_2}^*AS_{\Theta_1}=\widetilde{B}_1\dcf{1}+\dcf{2}\widetilde{B}_2^*.
	\end{equation}
\end{theorem}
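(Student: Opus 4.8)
The plan is to derive Theorem \ref{tca2} from Theorem \ref{tcara} by applying the unitary transformation $\tau_\Theta$, exactly as in the passage from Sarason's characterization via $D_\Theta$ to the one via $\widetilde D_\Theta$ in \cite{KT}. Recall from Section \ref{s4} that $\tau_{\Theta_i}$ is unitary, $\tau_{\Theta_i}(K_{\Theta_i})=K_{\widetilde\Theta_i}$, $\tau_{\widetilde\Theta_i}=\tau_{\Theta_i}^*$, and by \eqref{sz}, $\tau_{\Theta_i}S_{\Theta_i}\tau_{\Theta_i}^*=S_{\widetilde\Theta_i}^*$, hence also $\tau_{\widetilde\Theta_i}S_{\widetilde\Theta_i}^*\tau_{\widetilde\Theta_i}^*=S_{\Theta_i}$ and $\tau_{\widetilde\Theta_i}S_{\widetilde\Theta_i}\tau_{\widetilde\Theta_i}^*=S_{\Theta_i}^*$. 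By Theorem \ref{ttau}, $A\in\mathcal{MT}(\Theta_1,\Theta_2)$ if and only if $\widetilde A:=\tau_{\Theta_2}A\tau_{\Theta_1}^*\in\mathcal{MT}(\widetilde\Theta_1,\widetilde\Theta_2)$.

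First I would apply Theorem \ref{tcara} to $\widetilde A:K_{\widetilde\Theta_1}\to K_{\widetilde\Theta_2}$: it lies in $\mathcal{MT}(\widetilde\Theta_1,\widetilde\Theta_2)$ if and only if there are bounded $C_1:\mathcal D_{\widetilde\Theta_1}\to K_{\widetilde\Theta_2}$ and $C_2:\mathcal D_{\widetilde\Theta_2}\to K_{\widetilde\Theta_1}$ with
$$\widetilde A-S_{\widetilde\Theta_2}\widetilde A S_{\widetilde\Theta_1}^*=C_1 D_{\widetilde\Theta_1}+D_{\widetilde\Theta_2}C_2^*.$$
Now I substitute $\widetilde A=\tau_{\Theta_2}A\tau_{\Theta_1}^*$ and conjugate the whole identity: multiply on the left by $\tau_{\Theta_2}^*=\tau_{\widetilde\Theta_2}$ and on the right by $\tau_{\Theta_1}=\tau_{\widetilde\Theta_1}^*$. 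On the left side, using \eqref{sz} in the form $\tau_{\widetilde\Theta_i}S_{\widetilde\Theta_i}^*\tau_{\widetilde\Theta_i}^*=S_{\Theta_i}$ together with $\tau_{\widetilde\Theta_i}(S_{\widetilde\Theta_i}\widetilde A S_{\widetilde\Theta_1}^*)\cdots$ unwinding each $S_{\widetilde\Theta}$ into an $S_\Theta^*$, one gets precisely $A-S_{\Theta_2}^*AS_{\Theta_1}$. On the right side, recalling from the discussion around \eqref{ddd} that $\widetilde D_{\Theta_i}=\tau_{\widetilde\Theta_i}D_{\widetilde\Theta_i}\tau_{\widetilde\Theta_i}^*$ and $\widetilde{\mathcal D}_{\Theta_i}=\tau_{\widetilde\Theta_i}\mathcal D_{\widetilde\Theta_i}$, one rewrites $\tau_{\widetilde\Theta_2}C_1 D_{\widetilde\Theta_1}\tau_{\widetilde\Theta_1}^*=(\tau_{\widetilde\Theta_2}C_1\tau_{\widetilde\Theta_1}^*)(\tau_{\widetilde\Theta_1}D_{\widetilde\Theta_1}\tau_{\widetilde\Theta_1}^*)=\widetilde B_1\widetilde D_{\Theta_1}$ where $\widetilde B_1:=\tau_{\widetilde\Theta_2}C_1\tau_{\widetilde\Theta_1}^*:\widetilde{\mathcal D}_{\Theta_1}\to K_{\Theta_2}$ is bounded, and similarly $\tau_{\widetilde\Theta_2}D_{\widetilde\Theta_2}C_2^*\tau_{\widetilde\Theta_1}^*=\widetilde D_{\Theta_2}\widetilde B_2^*$ with $\widetilde B_2:=\tau_{\widetilde\Theta_1}C_2\tau_{\widetilde\Theta_2}^*:\widetilde{\mathcal D}_{\Theta_2}\to K_{\Theta_1}$ bounded. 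This yields \eqref{cara2}. Conversely, given $\widetilde B_1,\widetilde B_2$ satisfying \eqref{cara2}, setting $C_1=\tau_{\widetilde\Theta_2}^*\widetilde B_1\tau_{\widetilde\Theta_1}$ and $C_2=\tau_{\widetilde\Theta_1}^*\widetilde B_2\tau_{\widetilde\Theta_2}$ reverses every step, so $\widetilde A\in\mathcal{MT}(\widetilde\Theta_1,\widetilde\Theta_2)$, hence $A\in\mathcal{MT}(\Theta_1,\Theta_2)$ by Theorem \ref{ttau}.

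The routine-but-delicate part is the bookkeeping on the left-hand side: verifying that conjugating $S_{\widetilde\Theta_2}\widetilde A S_{\widetilde\Theta_1}^*$ by $\tau_{\widetilde\Theta_2},\tau_{\widetilde\Theta_1}^*$ produces $S_{\Theta_2}^*AS_{\Theta_1}$ — that is, the compressed shift turns into its adjoint under $\tau$, and the order of composition is preserved. This is a direct consequence of \eqref{sz} applied with $\Theta=\Theta_1$ and $\Theta=\Theta_2$, inserting $\tau_{\widetilde\Theta_i}^*\tau_{\widetilde\Theta_i}=I$ between factors; I expect it to be the main (though not conceptually hard) obstacle, as one must be careful that $S_{\widetilde\Theta_1}^*$ — which acts on $K_{\widetilde\Theta_1}$ — becomes $S_{\Theta_1}$ on $K_{\Theta_1}$, matched correctly with the domain of $\widetilde B_1$. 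Alternatively, one could give a self-contained proof mirroring Theorem \ref{tcara}: establish a Lemma analogous to Lemma \ref{10} with $S_{\Theta_2}^*(\cdot)S_{\Theta_1}$ in place of $S_{\Theta_2}(\cdot)S_{\Theta_1}^*$ and $\widetilde D$ in place of $D$, then iterate, using that $S_{\Theta_1}^N\to0$ and $S_{\Theta_2}^{*N}\to 0$ strongly on the respective model spaces; but the $\tau$-conjugation argument is shorter and is the one indicated by the sentence preceding the statement, so I would present that.
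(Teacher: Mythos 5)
Your proposal is correct and follows essentially the same route as the paper: transfer to $\mathcal{MT}(\widetilde\Theta_1,\widetilde\Theta_2)$ via Theorem \ref{ttau}, apply Theorem \ref{tcara} there, and conjugate the resulting identity by the unitaries $\tau_{\Theta_i}$, using \eqref{sz} and \eqref{ddd} to convert $S_{\widetilde\Theta_i}$, $S_{\widetilde\Theta_i}^*$, $D_{\widetilde\Theta_i}$ into $S_{\Theta_i}^*$, $S_{\Theta_i}$, $\widetilde D_{\Theta_i}$; your operators $\widetilde B_1=\tau_{\Theta_2}^*C_1\tau_{\Theta_1}$ and $\widetilde B_2=\tau_{\Theta_1}^*C_2\tau_{\Theta_2}$ (restricted to $\widetilde{\mathcal D}_{\Theta_1}$, $\widetilde{\mathcal D}_{\Theta_2}$) coincide with those in the paper's proof.
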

\begin{proof}
	Let $A:K_{\Theta_{1}}\rightarrow  K_{\Theta_{2}}$ be a bounded linear operator. By Theorem \ref{ttau}, $A$ belongs to $\mathcal{MT}(\Theta_{1}, \Theta_{2})$  if and only if $ \widetilde{A} = \tau_{\Theta_2}A\, \tau_{\Theta_1}^*$ belongs to $\mathcal{MT}(\widetilde{\Theta}_{1}, \widetilde{\Theta}_{2})$. By Theorem \ref{tcara} the latter happens if and only if there exist bounded linear operators \mbox{$B_1:\mathcal{D}_{\widetilde{\Theta}_{1}}\to K_{\widetilde{\Theta}_{2}}$} and $B_2: \mathcal{D}_{\widetilde{\Theta}_{2}}\to K_{\widetilde{\Theta}_{1}}$, such that
	\begin{equation}\label{X}
	\widetilde{A}-S_{\widetilde{\Theta}_2}\widetilde{A}S_{\widetilde{\Theta}_1}^*=	\tau_{\Theta_2}A\, \tau_{\Theta_1}^*-S_{\widetilde{\Theta}_2}\tau_{\Theta_2}A\, \tau_{\Theta_1}^*S_{\widetilde{\Theta}_1}^*=B_1\dfct{1}+\dfct{2}B_2^*.
	\end{equation}
In other words,
\begin{equation*}
	A-\tau_{\Theta_2}^*S_{\widetilde{\Theta}_2}\tau_{\Theta_2}A\, \tau_{\Theta_1}^*S_{\widetilde{\Theta}_1}^*\tau_{\Theta_1}=\tau_{\Theta_2}^*B_1\dfct{1}\tau_{\Theta_1}+\tau_{\Theta_2}^*\dfct{2}B_2^*\,\tau_{\Theta_1}.
\end{equation*}
By \eqref{sz} we have
$$\tau_{\Theta_2}^*S_{\widetilde{\Theta}_2}\tau_{\Theta_2}=\tau_{\widetilde{\Theta}_2}S_{\widetilde{\Theta}_2}\tau_{\widetilde{\Theta}_2}^*=S_{\Theta_{2}}^*\ \text{  and  }\
\tau_{\Theta_1}^*S_{\widetilde{\Theta}_1}^*\tau_{\Theta_1}=\tau_{\widetilde{\Theta}_1}S_{\widetilde{\Theta}_1}\tau_{\widetilde{\Theta}_1}^*=S_{\Theta_{1}},$$
while from \eqref{ddd} it follows that
$$\dfct{1}\tau_{\Theta_1}=\tau_{\Theta_1}\dcf{1}\quad\text{and}\quad \tau_{\Theta_2}^*\dfct{2}=\dcf{2}\tau_{\Theta_2}^*.$$
Thus \eqref{X} is equivalent to
	\begin{equation*}
A-S_{\Theta_2}^*AS_{\Theta_1}=\tau_{\Theta_2}^*B_1\,\tau_{\Theta_1}\dcf{1}+\dcf{2}\tau_{\Theta_2}^*B_2^*\,\tau_{\Theta_1}=\widetilde{B}_1\dcf{1}+\dcf{2}\widetilde{B}_2^*.
\end{equation*}
with
$$\widetilde{B}_1=\tau_{\Theta_2}^*B_1\,\tau_{\Theta_1|\widetilde{\mathcal{D}}_{\Theta_{1}}},\quad \widetilde{B}_1:\widetilde{\mathcal{D}}_{\Theta_{1}}\to K_{\Theta_{2}}$$
and
$$\widetilde{B}_2=\big(\tau_{\Theta_2}^*B_2^*\,\tau_{\Theta_1}\big)
^*=\tau_{\Theta_1}^*B_2\,\tau_{\Theta_2|\widetilde{\mathcal{D}}_{\Theta_{2}}},\quad \widetilde{B}_2:\widetilde{\mathcal{D}}_{\Theta_{2}}\to K_{\Theta_{1}}.$$
Note that $\tau_{\Theta_i}^*{\mathcal{D}}_{\widetilde{\Theta_{i}}}=\widetilde{\mathcal{D}}_{\Theta_{i}}$, $i=1,2$. This allows us to treat $\tau_{\Theta_2}^*B_2^*\,\tau_{\Theta_1}$ as an operator from $K_{\Theta_{1}}$ to $\widetilde{\mathcal{D}}_{\Theta_{2}}$. Moreover, we have
\begin{equation}\label{wyk2}
B_1=\tau_{\Theta_2}\widetilde{B}_1
\tau_{\Theta_1|{\mathcal{D}}_{\widetilde{\Theta}_{1}}}^*\quad\text{and}\quad B_2=\tau_{\Theta_1}\widetilde{B}_2
\tau_{\Theta_2|{\mathcal{D}}_{\widetilde{\Theta}_{2}}}^*.
\end{equation}
\end{proof}
Note from the proof of Theorem \ref{tca2} that if $A:K_{\Theta_1}\to K_{\Theta_2}$ satisfies \eqref{cara2} with some $\widetilde{B}_1:\widetilde{\mathcal{D}}_{\Theta_1}\to K_{\Theta_2}$ and $\widetilde{B}_2:\widetilde{\mathcal{D}}_{\Theta_2}\to K_{\Theta_1}$, then
$\widetilde{A}=\tau_{\Theta_2}A\tau_{\Theta_1}^*$ satisfies \eqref{X} with $B_1$ and $B_2$ given by \eqref{wyk2}. By Corollary \ref{cep}, $\widetilde{A}=A_{\Psi+\Xi^*}^{\widetilde{\Theta}_1, \widetilde{\Theta}_2}$ with
$$\Psi(z)x=(B_1\ke_{0}^{\widetilde{\Theta}_1}x)(z)
=(\tau_{\Theta_2}\widetilde{B}_1\tau^*_{\Theta_1}\ke_{0}^{\widetilde{\Theta}_1}x)(z)=(\tau_{\Theta_2}\widetilde{B}_1\widetilde{\ke}_{0}^{\Theta_1}x)(z)$$
and
$$\Xi(z)x=(B_2\ke_{0}^{\widetilde{\Theta}_1}x)(z)=(\tau_{\Theta_1}\widetilde{B}_2\tau^*_{\Theta_2}\ke_{0}^{\widetilde{\Theta}_2}x)(z)
=(\tau_{\Theta_1}\widetilde{B}_2\widetilde{\ke}_{0}^{\Theta_2}x)(z).$$
Moreover (see Remark \ref{rmk}), $\Psi\in \mathcal{M}_{\widetilde{\Theta}_2}$ and $\Xi\in \mathcal{M}_{\widetilde{\Theta}_1}$.

It follows from Theorem \ref{ttau} (see \eqref{wykwyk}) that $A=A_{\Phi}^{\Theta_1, \Theta_2}$ with
\begin{align*}
\Phi(z)
&=\Theta_2(z)(\Psi(\overline{z})+\Xi(\overline{z})^*)\Theta_1(z)^*\\
&=\Theta_2(z)\Psi(\overline{z})\Theta_1(z)^*+\Theta_2(z)\Xi(\overline{z})^*\Theta_1(z)^*\\
&=\Theta_2(z)\widetilde{\Xi}(z)\Theta_1(z)^*+(\Theta_1(z)\widetilde{\Psi}(z)\Theta_2(z)^*)^*.
\end{align*}
By Lemma \ref{5.6} below, $\Phi=\Psi_1+\Xi_1$ with $\Psi_1=\Theta_2\widetilde{\Xi}\Theta_1^*\in
\Theta_2(z\mathcal{M}_{\Theta_1})^*$ and $\Xi_{1}=\Theta_1\widetilde{\Psi}\Theta_2^*\in \Theta_1(z\mathcal{M}_{\Theta_2})^*$.

\begin{lemma}\label{5.6}
Let $\Phi\in H^2(\oL)$. If $\Phi\in \mathcal{M}_\Theta$, then $\widetilde{\Phi}\widetilde{\Theta}^*\in (z\mathcal{M}_{\widetilde{\Theta}})^*$.
\end{lemma}
\begin{proof}
We will show that if $\Phi\in \mathcal{M}_\Theta$, then $\Psi(z)=\widetilde{\Theta}(z)\overline{z}\Phi(\overline{z})\in \mathcal{M}_{\widetilde{\Theta}}$.
Let $\hh\in H^2(\oL)$. Then
\begin{align*}
\langle \Psi, (z\hh)^*\rangle_{L^2(\oL)}
&=\int\limits_{\mathbb{T}}\langle \Psi, \overline{z}\hh(z)^*\rangle_2dm(z)
=\int\limits_{\mathbb{T}}\langle \widetilde{\Theta}(z)\overline{z}\Phi(\overline{z}), \overline{z}\hh(z)^*\rangle_2dm(z)\\
&=\int\limits_{\mathbb{T}}\langle \Theta(z)^*\Phi(z), \widetilde{\hh}(z)\rangle_2dm(z)
=\int\limits_{\mathbb{T}}\langle\Phi(z), \Theta(z)\widetilde{\hh}(z)\rangle_2dm(z)\\
&=\langle \Phi, z\widetilde{\hh}\rangle_{L^2(\oL)}=0,
\end{align*}
Moreover,
\begin{align*}
\langle \Psi, \widetilde{\Theta}\hh\rangle_{L^2(\oL)}
=\int\limits_{\mathbb{T}}\langle \widetilde{\Theta}(z)\overline{z}\Phi(\overline{z}), \widetilde{\Theta}\hh(z)\rangle_2dm(z)
&=\int\limits_{\mathbb{T}}\langle \widetilde{\Phi}(z)^*, z\hh(z)\rangle_2dm(z)\\
=\langle \widetilde{\Phi}^*, z\hh\rangle_{L^2(\oL)}=0,
\end{align*}
which means that $\Psi\in \mathcal{M}_{\widetilde{\Theta}}$.
\end{proof}
As in the scalar case, we can use Theorem \ref{tcara} and Theorem \ref{tca2} to get the following.
\begin{corollary}
Let $\Theta_1, \Theta_2\in H^\infty(\oL)$ be two pure inner functions and let $A:K_{\Theta_1}\to K_{\Theta_2}$ be a bounded linear operator. Then $A$ belongs to $\mathcal{MT}(\Theta_1, \Theta_2)$ if and only if the following hold:
\begin{enumerate}[(a)]
\item there exist bounded linear operators $\widehat{B}_1:\mathcal{D}_{\Theta_1}\to K_{\Theta_2}$ and $\widehat{B}_2:\widetilde{{\mathcal{D}}}_{\Theta_2}\to K_{\Theta_1}$, such that
    $$S_{\Theta_2}^*A-AS_{\Theta_1}^*=\widehat{B}_1D_{\Theta_1}+\widetilde{D}_{\Theta_2}\widehat{B}_2^*.$$\label{i}
    \item there exist bounded linear operators $\widehat{B}_1:\widetilde{\mathcal{D}}_{\Theta_1}\to K_{\Theta_2}$ and $\widehat{B}_1:\mathcal{D}_{\Theta_2}\to K_{\Theta_1}$, such that
        $$S_{\Theta_2}A-AS_{\Theta_1}=\widehat{B}_1\widetilde{D}_{\Theta_1}+D_{\Theta_2}\widehat{B}_{2}^*.$$\label{ii}
\end{enumerate}
\end{corollary}
\begin{proof}
The proof is similar to the scalar case (see \cite{BM}). To prove \eqref{i} assume first that $A\in \mathcal{MT}(\Theta_1, \Theta_2)$. Then, by Theorem \ref{tcara}, there exist bounded linear operators $B_1:\mathcal{D}_{\Theta_1}\to K_{\Theta_2}$ and $B_2:\mathcal{D}_{\Theta_2}\to K_{\Theta_1}$, such that $$A-S_{\Theta_2}AS_{\Theta_1}^*=B_1D_{\Theta_1}+D_{\Theta_2}B_2^*.$$
Hence $$S^*_{\Theta_2}A-S^*_{\Theta_2}S_{\Theta_2}AS_{\Theta_1}^*=S^*_{\Theta_2}B_1D_{\Theta_1}+S^*_{\Theta_2}D_{\Theta_2}B_2^*,$$
and since $S_{\Theta}^*S_{\Theta_2}=I_{K_{\Theta_2}}-\widetilde{D}_{\Theta_2}$, we get
$$A-S_{\Theta_2}AS_{\Theta_1}^*=S^*_{\Theta_2}B_1D_{\Theta_1}+S^*_{\Theta_2}D_{\Theta_2}B_2^*-D_{\Theta_2}AS^*_{\Theta_1}.$$
Observe now that $S^*_{\Theta_2}D_{\Theta_2}=\widetilde{D}_{\Theta_2}S^*_{\Theta_2}$ and $\widetilde{D}_{\Theta_2}=\widetilde{D}_{\Theta_2}P_{\widetilde{D}_{\Theta_2}}$, where $P_{\widetilde{D}_{\Theta_2}}$ is the orthogonal projection from $K_{\Theta_2}$ to $\widetilde{D}_{\Theta_2}$ (see the formula for $\widetilde{D}_{\Theta_2}$ on page 11). It follows that
\begin{align*}
A-S_{\Theta_2}AS_{\Theta_1}^*
&=S^*_{\Theta_2}B_1D_{\Theta_1}+\widetilde{D}_{\Theta_2}(S_{\Theta_2}^*B_2^*-AS_{\Theta_1}^*)\\
&=\widehat{B}_1D_{\Theta_1}+\widehat{D}_{\Theta_2}\widehat{B}^*_{2},
\end{align*}
where $$\widehat{B}_1=S^*_{\Theta_2}B_1: \mathcal{D}_{\Theta_2}\to K_{\Theta_1}.$$
and
$$\widehat{B}_2=(P_{\widetilde{D}{\Theta_2}}(S^*_{\Theta_2}B^*_2-AS^*_{\Theta_1}))^*:\widetilde{\mathcal{D}}_{\Theta_2}\to K_{\Theta_1}.$$
The proof of the other implication is analogous.

To prove \eqref{ii} one can apply the same reasoning together with Theorem \ref{tca2}. Alternatively, one can use the fact that $A\in \mathcal{MT}(\Theta_1, \Theta_2)$ if and only if $\tau_{\Theta_2}A\tau^*_{\Theta_1}\in \mathcal{MT}(\widetilde{\Theta}_1, \widetilde{\Theta}_2)$ to show that \eqref{ii} is equivalent to \eqref{i}.
\end{proof}
\section{Shift invariance and MATTO's}
Shift invariance for TTO's was introduced in \cite{sar}. D. Sarason proved that a bounded linear operator $A:K_\theta\to K_\theta$ is a TTO if and only if it is shift invariant, i.e.,
$$\langle ASf, Sf\rangle_{L^2}=\langle Af, f\rangle_{L^2} \quad \text{for each} ~~f\in K_\theta~~ \text{such that }~~ Sf\in K_\theta.$$
In \cite{KT} we prove that the same is true for MTTO's.

 Here we consider shift invariance of MATTO's. As in the scalar case (see \cite{BM}), we characterize MATTO's in term of four (equivalent) types of shift invariance.

Recall that for an operator valued inner function $\Theta\in H^\infty(\oL)$ and for $f\in K_{\Theta}$ we have
$$Sf=M_zf\in K_{\Theta}~~\text{if and only if} ~~ f\perp \widetilde{\mathcal{D}}_{\Theta}~~~ (\tau_\Theta f(0)=0)$$
and
$$S^*f=M_{\overline{z}}f\in K_{\Theta} ~~ \text{if and only if} ~~ f\perp\mathcal{D}_{\Theta}~~ ~(f(0)=0).$$
\begin{theorem}
Let $\Theta_1, \Theta_2\in H^\infty(\oL)$ be two pure inner functions and let $A:K_{\Theta_1}\to K_{\Theta_2}$ be a bounded linear operator. Then $A$ belongs to $\mathcal{MT}(\Theta_1, \Theta_2)$ if and only if it has one (and all) of the following properties:
\begin{enumerate}[(a)]
\item $\langle AS^*f, S^*g\rangle_{L^2(\hsp)}=\langle A f,g\rangle_{L^2(\hsp)}$ for all $f\in K_{\Theta_1}$, $g\in K_{\Theta_2}$ such that $f\perp\mathcal{D}_{\Theta_1}$, $g\perp\mathcal{D}_{\Theta_2}$;\label{T1}
    \item $\langle AS^*f, g\rangle_{L^2(\hsp)}=\langle Af,Sg\rangle_{L^2(\hsp)}$ for all $f\in K_{\Theta_1}$, $g\in K_{\Theta_2}$ such that $f\perp\mathcal{D}_{\Theta_1}$, $g\perp\widetilde{\mathcal{D}}_{\Theta_2}$;\label{T2}
    \item $\langle ASf, Sg\rangle_{L^2(\hsp)}=\langle Af,g\rangle_{L^2(\hsp)}$ for all $f\in K_{\Theta_1}$, $g\in K_{\Theta_2}$ such that $f\perp\widetilde{\mathcal{D}}_{\Theta_1}$, $g\perp\widetilde{\mathcal{D}}_{\Theta_2}$;\label{T3}
    \item $\langle ASf, g\rangle_{L^2(\hsp)}=\langle Af,S^*g\rangle_{L^2(\hsp)}$ for all $f\in K_{\Theta_1}$, $g\in K_{\Theta_2}$ such that $f\perp\widetilde{\mathcal{D}}_{\Theta_1}$, $g\perp\mathcal{D}_{\Theta_2}$;\label{T4}
\end{enumerate}
\end{theorem}
\begin{proof}
\eqref{T1} If $A\in \mathcal{MT}(\Theta_1, \Theta_2)$, then by Theorem \ref{tcara},
$$A-S_{\Theta_2}AS_{\Theta_1}^*=B_1D_{\Theta_1}+D_{\Theta_2}B_2^*$$
for some bounded linear operators $B_1: \mathcal{D}_{\Theta_1}\to K_{\Theta_2}$ and $B_2: \mathcal{D}_{\Theta_2}\to K_{\Theta_1}$. It follows that for all $f\in K_{\Theta_1}$, $f\in K_{\Theta_2}$ such that $f\perp\mathcal{D}_{\Theta_1}$, $g\perp\mathcal{D}_{\Theta_2}$, we have
\begin{align*}
\langle AS^*f, S^*g\rangle_{L^2(\hsp)}
&=\langle A S^*_{\Theta_1}f,S^*_{\Theta_2}g\rangle_{L^2(\hsp)}=\langle  S_{\Theta_2}AS^*_{\Theta_2}f, g\rangle_{L^2(\hsp)}\\
&=\langle Af, g\rangle_{L^2(\hsp)}-\langle B_1D_{\Theta_1}f,g\rangle_{L^2(\hsp)}-\langle D_{\Theta_2}B_2^*f,g\rangle_{L^2(\hsp)}.
\end{align*}
Since $D_{\Theta_1}f=0$ and $D_{\Theta_2}B_2^*f\in \mathcal{D}_{\Theta_2}$, we get
\begin{equation}\label{Sin1}
\langle AS^*f, S^*g\rangle_{L^2(\hsp)}=\langle A f,g\rangle_{L^2(\hsp)}
\end{equation}
On the other hand, if \eqref{Sin1} holds for all $f\in K_{\Theta_1}$, $g\in K_{\Theta_2}$ such that $f\perp \mathcal{D}_{\Theta_1}$, $g\perp \mathcal{D}_{\Theta_2}$, we have
$$\langle (A-S_{\Theta_2}AS_{\Theta_1}^*)f,g\rangle_{L^2(\hsp)}=\langle Af,g\rangle_{L^2(\hsp)}- \langle AS^*f,S^*g\rangle_{L^2(\hsp)}=0.$$
This means that the operator $\T_A=A-S_{\Theta_2}AS_{\Theta_1}^*$ maps $\mathcal{D}^{\perp}_{\Theta_1}$ into $\mathcal{D}_{\Theta_2}$, or in other words,
\begin{equation}\label{Sin2}
(I_{K_{\Theta_2}}-P_{\mathcal{D}_{\Theta_2}})\T_A(I_{K_{\Theta_1}}-P_{\mathcal{D}_{\Theta_1}})=0,
\end{equation}
where $P_{\mathcal{D}_{\Theta_i}}$ is the orthogonal projection from $K_{\Theta_i}$ onto $\mathcal{D}_{\Theta_i}$, $i=1,2$.
Recall now that
$$\text{Range}P_{\mathcal{D}_{\Theta_i}}=\mathcal{D}_{\Theta_i}=\text{Range}D_{\Theta_i},~~~ i=1,2,$$
and so there exist bounded linear operators $R_i: K_{\Theta_i}\to K_{\Theta_i}$, $i=1,2,$ such that
$$P_{\mathcal{D}_{\Theta_i}}=D_{\Theta_i}R_i=R_i^*D_{\Theta_i}, ~~~ i=1,2$$
(the second equality follows from the fact that $P_{\mathcal{D}_{\Theta_i}}^*=P_{\mathcal{D}_{\Theta_i}}$). Together with \eqref{Sin2} this gives
\begin{align*}
A-S_{\Theta_2}AS_{\Theta_1}^*
&=\T_A=P_{\mathcal{D}_{\Theta_2}}\T_A+\T_AP_{\mathcal{D}_{\Theta_2}}-P_{\mathcal{D}_{\Theta_2}}\T_AP_{\mathcal{D}_{\Theta_1}}\\
&=D_{\Theta_2}R_2\T_A+(I_{K_{\Theta_2}}-P_{\mathcal{D}_{\Theta_2}})\T_AR_1^*D_{\Theta_1}
\end{align*}
and so $A$ satisfies \eqref{cara} with
$$B_1=(I_{K_{\Theta_2}}-P_{\mathcal{D}_{\Theta_2}})
\T_AR^*_{1|\mathcal{D}_{\Theta_1}}:
\mathcal{D}_{\Theta_1}\to K_{\Theta_2}$$
and
$$B_2=(P_{\mathcal{D}_{\Theta_2}}R_2\T_A)^*=\T_A^*R^*_{2|\mathcal{D}_{\Theta_2}}:\mathcal{D}_{\Theta_2}\to K_{\Theta_1}.$$
By Theorem \ref{tcara}, $A\in \mathcal{MT}(\Theta_1, \Theta_2)$.

%

The proof of \eqref {T2},\eqref{T3} and \eqref{T4} is analogous to the proof of \eqref{T1}.
\end{proof}

\section{Characterization with modified compressed shift operators}
  Modified compressed shifts were introduced by Sarason in \cite[section 10]{sar}. For any nonconstant inner function $\Theta$, suppose that $X_\Theta : \widetilde{\mathcal{D}}_{\Theta}\rightarrow \mathcal{D}_{\Theta}$, and consider $\widehat{X}_\Theta\in \mathcal{ L}(K_\Theta)$ defined by $\widehat{X}_\Theta f=X_\Theta P_{\widetilde{\mathcal{D}}_{\Theta}}f$.
The operator modified shift is  defined by
$$S_{\Theta, X_\Theta}=S_{\Theta}+(\widehat{X}_\Theta-S_{\Theta})P_{\widetilde{\mathcal{D}}_{\Theta}}, $$
or $$S_{\Theta, X_\Theta}=S_{\Theta}+P_{\mathcal{D}_{\Theta}}Y_{\Theta}P_{\widetilde{\mathcal{D}}_{\Theta}}, $$
which implies that $$S_{\Theta}=S_{\Theta, X_\Theta}-P_{\mathcal{D}_{\Theta}}Y_{\Theta}P_{\widetilde{\mathcal{D}}_{\Theta}}$$
where $Y_{\Theta}=\widehat{X}_{\Theta}-S_{\Theta}$.
\begin{theorem} Let $ {\Theta_1}, {\Theta_2}\in H^\infty (\oL)$ be two pure inner functions. Let $A:K_{\Theta_{1}}\rightarrow  K_{\Theta_{2}}$ be a bounded operator.
Then $A\in\mathcal{MT}(\Theta_{1}, \Theta_{2})$  if and only if
\begin{equation}
A-S_{\Theta_2, X_{\Theta_2}}AS_{\Theta_1, X_{\Theta_1}}^*=B P_{\mathcal{D}_{\Theta_1}}+P_{\mathcal{D}_{\Theta_2}}B^{'*}.
\end{equation}
\end{theorem}
\begin{proof}
Consider
\begin{eqnarray*}
 A-S_{\Theta_2}AS_{\Theta_1}^*& =& 	 A-(S_{\Theta_2, X_{\Theta_{2}}}-P_{\mathcal{D}_{\Theta_2}}Y_{\Theta_2}P_{\widetilde{\mathcal{D}}_{\Theta_2}})A
 (S_{\Theta_1,X_{\Theta_{1}}}^*-P_{\widetilde{\mathcal{D}}_{\Theta_1}}Y_{\Theta_1}^*P_{\mathcal{D}_{\Theta_1}})\\
 & =& 	A-S_{\Theta_2,X_{\Theta_{2}}}AS_{\Theta_1,X_{\Theta_{1}}}^*+S_{\Theta_{2},X_{\Theta_{2}}}
 P_{\widetilde{\mathcal{D}}_{\Theta_{1}}}Y_{\Theta_{1}}^{*}P_{\mathcal{D}_{\Theta_{1}}}\\
 &+&
 P_{\mathcal{D}_{\Theta_{2}}}Y_{\Theta_{2}}P_{\widetilde{\mathcal{D}}_{\Theta_{2}}}AS_{\Theta_{1},X_{\Theta_{1}}}^{*}
 -P_{\mathcal{D}_{\Theta_{2}}}Y_{\Theta_{2}}P_{\widetilde{\mathcal{D}}_{\Theta_{2}}}A
 P_{\widetilde{\mathcal{D}}_{\Theta_{1}}}Y_{\Theta_{1}}^{*}P_{\mathcal{D}_{\Theta_{1}}}\\
 & =& 	A-S_{\Theta_2,X_{\Theta_{2}}}AS_{\Theta_1,X_{\Theta_{1}}}^*+S_{\Theta_{2},X_{\Theta_{2}}}
 P_{\widetilde{\mathcal{D}}_{\Theta_{1}}}Y_{\Theta_{1}}^{*}P_{\mathcal{D}_{\Theta_{1}}}\\
 &+&P_{\mathcal{D}_{\Theta_{2}}}[Y_{\Theta_{1}}^{*}P_{\widetilde{\mathcal{D}}_{\Theta_{2}}}AS_{\Theta_{1},X_{\Theta_{1}}}^{*}
 -Y_{\Theta_{2}}^{*}P_{\widetilde{\mathcal{D}}_{\Theta_{2}}}A
 P_{\widetilde{\mathcal{D}}_{\Theta_{1}}}Y_{\Theta_{1}}^{*}P_{\mathcal{D}_{\Theta_{1}}}]\\
 & =& BP_{\mathcal{D}_{\Theta_{1}}}+P_{\mathcal{D}_{\Theta_{2}}}B^{\prime *}+T_{1}P_{\mathcal{D}_{\Theta_{1}}}+P_{\mathcal{D}_{\Theta_{2}}}T_{2}\\
 &=&(B+T_{1})P_{\mathcal{D}_{\Theta_{1}}}+P_{\mathcal{D}_{\Theta_{2}}}(B^{\prime*}+T_{2}),
 \end{eqnarray*}	
where $T_1=S_{\Theta_{2},X_{\Theta_{2}}}
 P_{\widetilde{{}\mathcal{D}}_{\Theta_{1}}}Y_{\Theta_{1}}^{*}$ and $T_2=Y_{\Theta_{1}}^{*}P_{\widetilde{\mathcal{D}}_{\Theta_{2}}}AS_{\Theta_{1},X_{\Theta_{1}}}^{*}
 -Y_{\Theta_{2}}^{*}P_{\widetilde{\mathcal{D}}_{\Theta_{2}}}AP_{\widetilde{\mathcal{D}}_{\Theta_{1}}}Y_{\Theta_{1}}^{*}P_{\mathcal{D}_{\Theta_{1}}}.$ From equation (3.8) of \cite{KT} it follows that there is an operator $J_{\Theta_{1}}\in \mathcal{L}(K_{\Theta_{1}})$ such that  $$P_{\mathcal{D}_{\Theta_{1}}}=(I-S_{\Theta_{1}}S_{\Theta_{1}}^*)J_{\Theta_{1}}=D_{\Theta_{1}}J_{\Theta_{1}}=J_{\Theta_{1}}^*D_{\Theta_{1}},$$
  and similarly there is $J_{\Theta_{2}}\in \mathcal{L}(K_{\Theta_{2}})$ such that
  $$P_{\mathcal{D}_{\Theta_{2}}}=(I-S_{\Theta_{2}}S_{\Theta_{2}}^*)J_{\Theta_{2}}=D_{\Theta_{2}}J_{\Theta_{2}}=J_{\Theta_{2}}^*D_{\Theta_{2}}.$$
  Then we have
\begin{eqnarray*}
  A-S_{\Theta_2}AS_{\Theta_1}^*
  & =&(B+T_{1})J_{\Theta_{1}}^{*}D_{\Theta_{1}}+D_{\Theta_{2}}J_{\Theta_{2}}(B^{\prime *}+T_{2})\\
   & =&(B+T_{1})J_{\Theta_{1}}^{*}D_{\Theta_{1}}+D_{\Theta_{2}}[(B^{\prime}+T_{2}^{*})J_{\Theta_{2}}^{*}]^{*}\\
   &=& \B D_{\Theta_{1}}+D_{\Theta_{2}}\B^{\prime *}
 \end{eqnarray*}
 where $\B = (B+T_{1})J_{\Theta_{1}}^{*}$ and $\B^{\prime} = (B^{\prime}+T_{2}^{*})J_{\Theta_{2}}^{*}$.  The required result follows from this and Theorem \ref{tcara}.
\end{proof}

\medskip

\noindent$\bf{Acknowledgment}$

\noindent The first author is supported by the project TUBITAK 1001, 123F356.

\end{document}